\numberwithin{equation}{section} 
\newcounter{mnote}
\theoremstyle{plain}
\newtheorem{theorem}{Theorem}[section]
\newtheorem{proposition}[theorem]{Proposition}
\newtheorem{lemma}[theorem]{Lemma}
\theoremstyle{definition}
\theoremstyle{remark}
\newtheorem{remark}[theorem]{Remark}
\newcommand{\vect}[1]{\mathbf{#1}}
\newcommand{\bk}{\vect{k}}
\newcommand{\field}[1]{\mathbb{#1}}
\newcommand{\nN}{\field{N}}
\newcommand{\nZ}{\field{Z}}
\newcommand{\nR}{\field{R}}
\newcommand{\nH}{\field{H}}
\newcommand{\nV}{\field{V}}
\newcommand*  {\Z}{ {{{\mathbb Z}^*}^2}}
\begin{document}
\title[A Quasi-Geostrophic Two-layer  Model]{Long-time behavior of a two-layer model of baroclinic
quasi-geostrophic turbulence}

\date{April 14, 2012. {\it Journal of Mathematical Physics} {\bf (to appear)}.}

\author{Aseel Farhat}
\address[Aseel Farhat]{Department of Mathematics\\
                University of California\\
        Irvine, CA 92697-3875, USA}
\email[Aseel Farhat]{afarhat@math.uci.edu}
\author{R. Lee Panetta}
\address[R. Lee Panetta]{Department of Atmospheric Sciences\\
Texas AM University\\
Texas 77843, USA}
\email[R. Lee Panetta]{r-panetta@tamu.edu}
\author{Edriss S. Titi}
\address[Edriss S. Titi]{Department of Mathematics, and Department of Mechanical and Aero-space Engineering\\
University of California\\
Irvine, CA 92697-3875, USA.
Also, The Department of Computer Science and Applied Mathematics\\
The Weizmann Institute of Science, Rehovot 76100, Israel.
Fellow of the Center of Smart Interfaces (CSI), Technische Universit\"{a}t Darmstadt, Germany. }
\email[Edriss S. Titi]{etiti@math.uci.edu and edriss.titi@weizmann.ac.il}
\author{Mohammed Ziane}
\address[Mohammed Ziane]{Department of Mathematics\\
 University of Southern California\\
 Los Angeles, California 90089-1113, USA}
\email[Mohammed Ziane]{ziane@usc.edu}

\keywords{Two-layer quasi-geostrophic model of ocean dynamics, zonal jet model, the Kuramoto-Sivashinsky equation, global attractors, inertial manifold.}
\thanks{MSC Subject Classifications: 35Q35, 76D03, 76F10}
\begin{abstract}
We study a viscous two-layer quasi-geostrophic beta-plane model that is forced by imposition of a spatially uniform vertical shear in the eastward (zonal) component of the layer flows, or equivalently a spatially uniform north-south temperature gradient.  We prove that the model is linearly unstable, but that non-linear solutions are bounded in time by a bound which is independent of the initial data and is determined only by the physical parameters of the model.  We further prove, using arguments first presented in the study of the Kuramoto-Sivashinsky equation, the existence of an absorbing ball in appropriate function spaces, and in fact the existence of a compact finite-dimensional attractor, and provide upper bounds for the fractal and Hausdorff dimensions of the attractor.  Finally, we show the existence of an inertial manifold for the dynamical system generated by the model's solution operator.  Our results provide rigorous justification for observations made by Panetta based on long-time numerical integrations of the model equations.
\end{abstract}
\maketitle

{\it Dedicated to Professor Peter Constantin on the occasion of his  60th birthday.}
 \bigskip
\section{Introduction}
The aim of this work is to present a mathematical analysis of a highly simplified, or ``minimal", quasi-geostropic model of baroclinic turbulence in a rapidly rotating atmosphere or ocean at large spatial scales.  The model is minimal in the sense that it has simple but geophysically motivated representations of central factors responsible for the large-scale features of extra-tropical flows, and little else. The number of layers is minimal for existence of a flow-determined temperature field, and the model includes a spatially featureless large-scale thermal forcing, a horizontal variation of the effective local rotation rate (through the beta-plane approximation), and forms of viscous dissipation at both large and small scales, without any imposition of flow structure by special features of the forcing or any  boundary walls.  This model is essentially the one introduced by \cite{Haidvogel_Held_1980} as a boundary-free modification of the original version of \cite{Phillips_2},  with the intent of creating a model of spatially ``homogeneous'' turbulence.  However numerical integrations \cite{Panetta-1993} showed the unsuspected development of spatial structures (zonal jets and associated ``storm tracks") of great persistence, and the presence of long time scales, and dynamically intrinsic spatial scales that are even now only understood by phenomenological arguments.  Our work is intended as a step toward explaining some of the features revealed by the numerical integrations.

Although the flow in each of the two layers is two-dimensional, the dynamical system as a whole has certain features in common with a one-dimensional system that also shows spatial pattern-forming features, the Kuramoto-Sivashinksy equation, as we explain later in this section. We then show in succeeding sections that arguments presented in \cite{Collet-etal}, \cite{Goodman} and \cite{Nicolaenko-Scheurer-Temam} for the case of the Kuramoto-Sivashinsky equation can be used to show the existence of a global attractor of finite fractal and Hausdorff dimension, as well as an inertial manifold.

We use the version of a  two-layer model that appeared in \cite{Haidvogel_Held_1980} and \cite{Panetta-1993}, which has two fluid layers of equal resting depth $H$, the lower layer has a slightly greater density, $\rho_2$, than the upper density, $\rho_1$. Motions are assumed to be quasi-geostrophic and to take place on a beta-plane, and the Bousinesq approximation is enforced (for a discussion of two-layer models see, e.g., \cite{Pedlosky-87}).  In terms of the dimensional streamfunctions $\Psi_i^*(x^*, y^*)$ in the layer $i=1,2$, the evolution equations for the potential vorticity fields are:
\begin{subequations}\label{two_layer_Panetta}
\begin{align}
\frac {\partial Q_1^*}{\partial t^*} +J(\Psi_1^*, Q^*_1)& = \kappa_T^*\hat\Psi^* +\nu^*(-\triangle)^3\Psi^*_1, \\
\frac {\partial Q_2^*}{\partial t^*} +J(\Psi_2^*, Q^*_2)& =-\kappa_T^*\hat\Psi^* - \kappa_M^*\triangle \Psi_2^*+\nu^*(-\triangle)^3\Psi^*_2,
\end{align}
where the potential vorticity $Q_i^*$ in the layer $i=1,2$ is given by:
\begin{equation}
 Q_i^*=f_0^* +\beta^*y^* +\Delta\Psi_i^* + \frac{(-1)^i}{\lambda^2}\hat\Psi^* ,
\end{equation}
\end{subequations}
where $\hat\Psi^*= (\Psi^*_1-\Psi^*_2)/2$, and $J(.,.)$ denotes the Jacobian:
\begin{equation*}
J(\psi,q) = \frac{\partial \psi}{\partial x}\frac{\partial q}{\partial y} - \frac{\partial \psi}{\partial y}\frac{\partial q}{\partial x}.
\end{equation*}
$\kappa_T^*$ is the temperature (or buoyancy) damping, $\kappa_M^*$  is the mechanical damping (i.e., surface drag or Ekman pumping), $\nu^*$ is small scale mixing diffusion constant (or the numerical viscosity), and $\lambda$ is the Rossby radius defined by $\lambda = \left[ g(\rho_2-\rho_1)H/(2\rho_2f_0)\right]^{1/2}$.

The deviations of solutions of system \eqref{two_layer_Panetta} from a time-invariant steady state with horizontally uniform vertical shear $U_0$ has been studied numerically in \cite{Haidvogel_Held_1980} and \cite{Panetta-1993}. Panetta assumed that the lower layer is at rest and the upper stream functions were written in the form:
\begin{align}
\Psi_1^*(t;x^*,y^*)=\psi_1^*(t;x^*,y^*)-U_0y^*, \quad \Psi_2^*(t;x^*,y^*)= \psi_2^*(t;x^*,y^*).
\end{align}
Here $\psi_1^*, \psi_2^*$ are the deviation stream functions and their corresponding potential vorticities are $q_1^*$ and $q_2^*$, respectively.

The transient stream-functions $\psi_i(t;x,y)$ (called ``eddy''), after
normalization of $\psi_i^*$, $q_i^*$, for $i=1,2$, $x^*$ and $y^*$ using velocity and length scales $U_0$ and $\lambda$ (see, e.g. \cite{Panetta-1993}) will satisfy the following evolution equations:
\begin{subequations}\label{two_layer_3}
\begin{align}
&\frac {\partial q_1}{\partial t} +J(\psi_1, q_1) = -\frac{\partial q_1}{\partial x} -(\beta +\frac 12)\frac{\partial \psi_1}{\partial x} +\kappa_T\hat\psi +\nu(-\Delta)^3\psi_1,  \label{model-1}\\
&\frac {\partial q_2}{\partial t} +J(\psi_2, q_2) = -(\beta -\frac 12)\frac{\partial \psi_2}{\partial x}-\kappa_M\Delta\psi_2 -\kappa_T\hat\psi +\nu(-\Delta)^3\psi_2, \label{model-2}\\
&\hat\psi= \frac 12(\psi_1-\psi_2), \quad q_1= \Delta \psi_1- \hat \psi, \quad q_2 = \Delta \psi_2+ \hat \psi. \label{N-q-psi}
\end{align}
\end{subequations}
Here, $\kappa_T, \kappa_M$ are the non-dimensional buoyancy and mechanical damping parameters, respectively, and can have any real value. $\beta$  is the non-dimensional central forcing parameter and can have any real value. The smaller $\beta$ is in absolute value, the stronger the forcing.
In the absence of any dissipation, non-dimensional beta must be less than $\frac 1 2$ in absolute value, otherwise, there is no instability. Addition of dissipative parameters changes the stability criterion a little.


The first two terms on the right-hand side of \eqref{model-1} and the first term on the right-hand side of \eqref{model-2} describe the interactions of the eddy fields with the imposed background state; the other terms on the right-hand side in each equation are the dissipation terms (whether physical or artificial/numerical). We note here that the order of the numerical dissipation $(-\Delta)^3$ is {\em ad hoc}: use of such ``hyperviscosity'' is common in geophysical models, where it functions as a computationally convenient method in spectral codes to simply absorb small scale energy and enstrophy, and to do so primarily over a small range of wave numbers at the high end of the spectrum in which there is no direct physical interest. Sometimes the hyperviscosity is applied to the $q_i$ rather than the $\psi_i$: from the phenomenological point of view, at high wavenumbers there is little difference between $q_i$ and $\Delta \psi_i$, and there is no physical basis to choose one form of dissipation rather than the other.

In geophysics, there are different types of multi-layer models. A derivation of a multi-layer model, in which the fluid consists of a finite number of homogeneous layers of uniform but distinct densities, is presented in \cite{Pedlosky-87}. As mentioned above, the two-layer model is the simplest layer model that has a representation of baroclinic (temperature-related) dynamics important in extratropical flows in planetary atmospheres and oceans.  The version treated in \cite{Pedlosky-87} differs from system \eqref{two_layer_3} in several of the linear non-conservative terms, the principal difference being the lack of numerical viscosity terms (i.e. $\nu = 0$).
The mathematical analysis of multi-layer models has been studied by several authors. For example Bernier in \cite{Ber} investigated the existence and uniqueness of solutions for a viscous multi-layer problem and proved the existence of absorbing sets and of a maximal attractor for the model; moreover, an upper bound on the dimension of the global attractor using the Sobolev-Lieb-Thirring lemma \cite{T} was obtained. Bernier and Chueshov, later in \cite{Chu}, investigated the finiteness of determining degrees of freedom for a similar multi-layer model. Their proof is based on the ideas introduced in \cite{Jones1}, \cite{Jones2} and \cite{Jones3} (see also \cite{Foias_Manley_Temam_Treve_1983} and \cite{Foias_Prodi_1967}). The models studied by Bernier and Chueshov are similar to the 2D Navier-Stokes equations and were supplemented with specific boundary conditions. We stress that our model is different from these models due to the fact that it is constructed about a background zonal shear flow, which makes the global nonlinear stability more involved.

The same two-layer quasi-geostrophic model \eqref{two_layer_3} was studied by Onica and Panetta in \cite{OP1} and \cite{OP2}. In \cite{OP1}, they obtained the global existence and uniqueness of weak solutions for $\psi = (\psi_1,\psi_2)$ in $H^1_{per}\times H^1_{per}$ in the case of dissipative terms $-(-\Delta)^{1+\alpha}q_1$, $-(-\Delta)^{1+\alpha} q_2$, respectively, where $\alpha$ is an arbitrary non-negative real number, and $\kappa_T =0$. Later on, in \cite {OP2}, they continued their work and proved the existence and uniqueness of classical solutions that are analytic in both space and time.

In the next section we will study the linear system and show that, for certain parameter values of the system \eqref{two_layer_3} (mainly when  the size of the domain is large enough),  the steady state solution $\psi_1 =\psi_2 = 0$ is linearly unstable. However, it has been  conjectured in \cite{Panetta-1993}, based on numerical evidence, that system \eqref{two_layer_3} admits a finite-dimensional global attractor, in particular, that the system is non-linearly globally stable, i.e.  all solutions are eventually (asymptotically in time) bounded. This observation  is an important tool in the physical and theoretical/analytical interpretations of the numerical results obtained in \cite{Panetta-1993}. As we will show in section 4, there is no issue obtaining the global existence and uniqueness of a weak solution for $q=(q_1,q_2) \in L^2_{per}\times L^2_{per}$, but the main challenge is to prove  Panetta's conjecture; i.e. proving the existence of a finite dimensional global attractor of system \eqref{two_layer_3}. The extra term $\frac{\partial q_1}{\partial x}$ that appears in the first equation of \eqref{two_layer_3}, which is due to the fact that the model is derived around a zonal shear flow, adds some difficulties in studying the global nonlinear stability of the system.

The numerical results obtained in  \cite{Panetta-1993} suggest a stabilization effect by the nonlinear terms. Remarkably, we recall that this is a well known phenomenon for certain nonlinear dissipative evolution equations such as the one-dimensional Kuramoto--Sivashinsky equation
$$
\frac{\partial w}{\partial t}+ \frac{\partial^4w}{\partial x^4}+
\frac{\partial^2w}{\partial x^2} + w\frac{\partial w}{\partial x}=0,
$$
and the one-dimensional Burgers--Sivashinsky equation
$$
\frac{\partial v}{\partial t}- \frac{\partial^2v}{\partial x^2}-
 v + v\frac{\partial v}{\partial x}=0,
$$
subject to periodic boundary conditions, with period $L>0$, in the basic period interval $\left[-\frac{L}{2},\frac{L}{2}\right]$, see for instance \cite{Collet-etal}, \cite{Goodman}, \cite{Nicolaenko-Scheurer-Temam}, \cite{T} and the references therein.

The Kuramoto--Sivashinksy equation has been studied extensively by many authors. In particular, the authors of \cite{Nicolaenko-Scheurer-Temam} proved the global  nonlinear stability, i.e. that the system is dissipative, in the case of odd-periodic solutions. This result was generalized to include all periodic solutions in \cite{Collet-etal} and \cite{Goodman}. See also the recent studies by Bronski and Gambill in \cite{Bronski_Gambill_2006} and by Giacomelli and Otto in \cite{Otto} and by Otto in \cite{Otto2}.

From the physical point of view, the nonlinear stability of the Kuramoto- Sivashinsky equation (KSE) can be explained as follows. The linear part of the KSE is unstable for the low wave numbers: the low modes grow exponentially, while the high modes decay exponentially. The KSE without the linear part is the inviscid Burgers equation which preserves the energy of the solution for as long as the solution exists and is smooth. On the other hand, we know that Burgers equation forms a shock in finite time, which is manifested by the blow-up of the derivative of the solution. That is, there is a mechanism of transferring energy from the low wave numbers to the high wave numbers. Putting all this together, we see that the nonlinearity in the one-dimensional KSE pushes energy from the low wave numbers to the high ones, hence stabilizes the system, because the linear part is strongly stable for large wave numbers.

From the mathematical point of view, the idea in the proof of the nonlinear stability in \cite{Nicolaenko-Scheurer-Temam}  for the case of an odd periodic solution relies on a decomposition of the solution $w(t;x)=\tilde w(t;x) +\varphi ,$ where $(1-\varphi _x)$ is a smooth approximation of the periodic delta function centered around $x = \pm \frac{L}{2}$. The idea of translation was later generalized in \cite{Collet-etal} and in \cite{Goodman} for general periodic solutions. In space dimension $n\geq 2$, the problem of global existence of regular solutions and their stability remains open; the methods employed in the one-dimensional case do not seem to be capable of extension to higher dimensional Kuramoto--Sivashinsky equations (for further discussion of this issue see, e.g., \cite{Bellout_Benachour_Titi_2003} and \cite{Cao_Titi_2006}). But, in the case of the two-dimensional Burger--Sivashinsky  equation, {the global existence of regular solutions follows from} the maximum principle.  We should mention, however, that for a variant of a 2D  Kuramoto--Sivashinsky equation, used for the study of waves in fluids on an inclined plane, the author of \cite{Pinto-1999} was able to generalize the methods developed by \cite{Collet-etal}, \cite{Goodman} and \cite{Nicolaenko-Scheurer-Temam}, and prove that the solutions to this two-dimensional model are uniformly asymptotically bounded, i.e. the system has an absorbing set. In fact, this system has a finite-dimensional global attractor.

Adapting similar tools and techniques developed for the one-dimensional Kuramoto-Sivashinsky equation, we prove the existence of an absorbing ball and a finite-dimensional global attractor for a slightly general version of system \eqref{two_layer_3}; that is when the dissipation terms are replaced by $(-\Delta)^{m}\psi_1$ and $(-\Delta)^{m}\psi_2$, respectively, when $m>5/2$. The condition $m>5/2$ is needed in our analysis in section 5 to prove the uniform boundedness in time of the kinetic energy of the system. We also prove the existence of an inertial manifold of the system and obtain an upper bound on the {dimension} of its global attractor.

We are interested in spatial periodic perturbations about the base background flow with periods $L$ in both directions. Therefore, we supplement system \eqref{two_layer_3} with periodic boundary condition over the domain $\Omega=\left[-\frac{L}{2}, \frac{L}{2}\right]^2,$ $(L\geq 1),$ and initial data $q_i(x, 0)= q_{0i}, \ i=1,2.$ We note that, due to the periodic boundary conditions, integrating \eqref{model-1} and \eqref{model-2} over $\Omega = \left[-\frac{L}{2}, \frac{L}{2}\right]$ yield
\begin{equation*}
\frac{d}{dt}\int_{\Omega}q_1\, dxdy= \frac{\kappa_T}2 \int_\Omega (\psi_1-\psi_2)\, dxdy,\quad
\frac{d}{dt}\int_{\Omega}q_2\, dxdy= -\frac{\kappa_T}2 \int_\Omega (\psi_1-\psi_2)\, dxdy.
\end{equation*}
Furthermore, by integrating \eqref{N-q-psi}, we also have
\begin{equation*}
\int_\Omega q_1= -\frac 12 \int_\Omega (\psi_1-\psi_2)\, dxdy, \quad \quad
\int_\Omega q_2= \frac 12 \int_\Omega (\psi_1-\psi_2)\, dxdy,
\end{equation*}
Therefore, if we assume $\int_{\Omega}q_{i}^0\, dxdy=0$, then $\int_{\Omega}q_{i}(x,t)\, dxdy=0$ for all $t\geq 0$, for $i=1,2$.
Furthermore, the spatial average of $\psi_1$ is equal to the spatial average of $\psi_2.$ Without loss of generality, we may assume that the averages over $\Omega$ of both $\psi_1$ and $\psi_2$ are zero. Moreover, we note that the system of equations:
\begin{align}
q_1= \Delta \psi_1- \frac1{2} (\psi_1-\psi_2), \quad q_2= \Delta \psi_2+ \frac1{2} (\psi_1-\psi_2),
\end{align}
where the unknowns are $\psi_1$ and $\psi_2$ is elliptic. Specifically, if we add and subtract the above equations, we obtain
\begin{align}\label{elliptic_two_layer}
q_1+q_2 = \Delta(\psi_1+\psi_2), \quad q_1-q_2 = \Delta(\psi_1-\psi_2) - (\psi_1-\psi_2).
\end{align}
Its clear now that system \eqref{elliptic_two_layer} is elliptic and thus, for given $q_1$ and $q_2$, we can solve for $(\psi_1-\psi_2)$ and $(\psi_1+\psi_2)$, under the imposed boundary condition and zero averages, uniquely. Thus we can solve system \eqref{elliptic_two_layer} uniquely for $\psi_1$ and $\psi_2$. {The solutions can be computed explicitly  in terms of the Fourier series.}

For simplicity of the presentation, in sections 5, 6, and 7, we will restrict ourselves to the case of  initial data that are odd with respect to the variable $y$. The uniqueness of local  solutions shows that the solution will remain odd in the $y$-direction for all time. Therefore, our solutions will satisfy the following boundary condition, which is a consequence of the odd symmetry:
\begin{equation}
q_i\left(x, -\frac{L}{2}\right)=q_i\left(x,\frac{L}{2}\right)=0,\quad \psi_i\left(x, -\frac{L}{2}\right)=\psi_i\left(x, \frac{L}{2}\right)=0,
\label{zero-condition}
\end{equation}
for all $ x\in \left[-\frac{L}{2}, \frac{L}{2}\right]$ and $i=1,2.$

The paper is organized as follows. In section 2, we study the linearized model, about the {steady} state $\psi_1=\psi_2 = 0$, and show that for certain range of the parameters this linearized model is unstable. In section 3, we introduce our notation and function spaces, as well as {some  useful} regularity estimates that will be used later to establish our results. In section 4, we prove the global existence of solutions. In section 5, we prove the existence of an absorbing ball as well as the existence of the global attractor. In section 6, we obtain an upper bound on the fractal and Hausdorff dimensions of the global attractor, and we prove the existence of an inertial manifold in section 7. Section 8 is an appendix in which we prove a Lieb-Thirring type inequality. This inequality provides a useful tool in obtaining an upper bound for the dimension of the global attractor of system \eqref{two_layer_3}.

\bigskip
\section{The linear problem}
In this section, we study the stability of the linearized version of system \eqref{two_layer_3} about the trivial state $\psi_1=\psi_2=0.$ We show that the linearized system is unstable for certain ranges of the parameters, which is an indication of a nontrivial dynamics of the full nonlinear system \eqref{two_layer_3}. The linearized system is given by:
\begin{subequations}\label{linear_two_layer}
\begin{align}
&\frac {\partial q_1}{\partial t} =-\frac{\partial q_1}{\partial x}-(\beta +\frac 12)\frac{\partial \psi_1}{\partial x} +\kappa_T\hat\psi+\nu(-\Delta)^3\psi_1  \label{linear-1}\\
&\frac {\partial q_2}{\partial t}  = -(\beta -\frac 12)\frac{\partial \psi_2}{\partial x}-\kappa_M\Delta\psi_2 -\kappa_T\hat\psi +\nu(-\Delta)^3\psi_2, \label{linear-2}\\
&\hat\psi = (\psi_1-\psi_2)/{2}, \quad q_1= \Delta\psi_1- \hat \psi, \quad q_2= \Delta\psi_2+ \hat \psi,\label{q-psi}\\
&\int_\Omega q_i\,dxdy=\int_\Omega \psi_i\,dxdy=0,\quad i=1,2.
\end{align}
\end{subequations}
We write the Fourier expansions of $q_j$ and $ \psi_j, \  j=1,2,$
\begin{equation*}
q_j(x) = \sum_{\bk\in {\Z}} q_{j\bk} \mbox{e}^{2\pi i\frac {x\cdot \bk}{L}}, \quad \quad \psi_j(x) = \sum_{\bk\in \Z} \psi_{j\bk} \mbox{e}^{2\pi i\frac {x\cdot \bk}{L}}, \quad j=1,2,
\end{equation*}
where $\Z= {\mathbb Z}^2-(0,0).$
From \eqref{q-psi}, we obtain
\begin{equation*}
q_{1\bk}= -\left(\frac {2\pi |\bk|}{L}\right)^2\psi_{1\bk} - \frac1{2} (\psi_{1\bk}-\psi_{2\bk}), \quad q_{2\bk}= -\left(\frac {2\pi |\bk|}{L}\right)^2\psi_{2\bk} + \frac1{2} (\psi_{1\bk}-\psi_{2\bk}),
\end{equation*}
for every $\bk \in \Z.$ Solving for $\psi_{1\bk}$ and $\psi_{2\bk}$, we have for $|\bk|\neq 0,$
\begin{align}
\psi_{1\bk}= -\alpha_{\bk}q_{1\bk}-\gamma_{\bk} q_{2\bk},\qquad \psi_{2\bk}= -\gamma_{\bk}q_{1\bk}-\alpha_{\bk} q_{2\bk},
\end{align}
where
\begin{equation}
\alpha_{\bk}=  \frac{\left(\frac {2\pi |\bk|}{L}\right)^2 +1/2}{\left(\frac {2\pi |\bk|}{L}\right)^4+
\left(\frac {2\pi |\bk|}{L}\right)^2},  \quad \text{and}\quad
\gamma_{\bk}=  \frac{1/2}{\left(\frac {2\pi |\bk|}{L}\right)^4+ \left(\frac {2\pi |\bk|}{L}\right)^2}.
\end{equation}

From the above one may conclude that there exists a constant $c_0(L,s)$ such that
\begin{equation}
\sum_{\bk\in \Z}|\bk|^{2+2s}(|\psi_{1\bk}|^2 +|\psi_{2\bk}|^2)\leq c_0(L,s)
\sum_{\bk\in \Z}|\bk|^{2s}(|q_{1\bk}|^2+|q_{2\bk}|^2),
\end{equation}
for all $s\in \mathbb R.$
It can be shown easily that system (\ref{linear-1}) and (\ref{linear-2}) is
equivalent to the infinite system of ordinary differential equations:
\begin{align}
\frac{dq_{1\bk}}{dt}= a_{\bk} q_{1\bk} +b_k q_{2\bk}, \qquad  \frac{dq_{2\bk}}{dt}= c_{\bk} q_{1\bk} +d_k q_{2\bk},
\end{align}
for all $\bk=(k_1,k_2) \in \Z$, where
\begin{align*}
a_{\bk}&=-\frac{2\pi ik_1}{L}\left(1- (\beta+1/2)\alpha_{\bk}\right) -\frac{\kappa_T}{2}(\alpha_{\bk}-\gamma_{\bk}) - \nu \left(\frac {2\pi |\bk|}{L}\right)^6\alpha_{\bk},\\
b_{\bk}&= \frac{2\pi ik_1}{L}(\beta+1/2)\gamma_{\bk}+ \frac{\kappa_T}{2}(\alpha_{\bk}-\gamma_{\bk}) - \nu\left(\frac {2\pi |\bk|}{L}\right)^6 \gamma_{\bk},\\
c_{\bk}&= \frac{2\pi ik_1}{L}(\beta-1/2)\gamma_{\bk}+\frac{\kappa_T}{2}(\alpha_{\bk}-\gamma_{\bk})- \kappa_M \left(\frac {2\pi |\bk|}{L}\right)^2\gamma_{\bk} -\nu\left(\frac {2\pi |\bk|}{L}\right)^6 \gamma_{\bk},
\end{align*}
\begin{align*}
d_{\bk}&= \frac{2\pi ik_1}{L}(\beta-1/2)\alpha_{\bk}-\frac{\kappa_T}{2}(\alpha_{\bk}-\gamma_{\bk}) -\kappa_M\left(\frac {2\pi |\bk|}{L}\right)^2\alpha_{\bk} -\nu \left(\frac {2\pi |\bk|}{L}\right)^6 \alpha_{\bk}.
\end{align*}
For $L\geq 1$, we note that for $|\bk|\geq 1$, we have
\begin{equation}
\alpha_{\bk}+\gamma_{\bk} = \left(\frac{L}{2\pi|\bk|}\right)^2, \quad \text{ and } \quad
|\alpha_{\bk}-\gamma_{\bk}| \leq 1.
\end{equation}
{Let $m_0\in \nN$, be fixed,} and assume that $|\bk|<m_0$, hence, we have
\begin{align*}
\left|\frac{\kappa_T}{2}(\alpha_{\bk}-\gamma_{\bk})\right|+ \nu\left(\frac {2\pi |\bk|}{L}\right)^6 (\alpha_{\bk}+\gamma_{\bk})\leq \frac{\kappa_T}{2} +\nu (2\pi m_0)^4,
\end{align*}
and
\begin{align*}
\kappa_M\left(\frac {2\pi |\bk|}{L}\right)^2(\alpha_{\bk}+\gamma_{\bk})+|\frac{\kappa_T}{2}(\alpha_{\bk}-\gamma_{\bk})|+\nu\left(\frac {2\pi |\bk|}{L}\right)^6 (\alpha_{\bk}+\gamma_{\bk})\\ \leq  \kappa_M+\kappa_T + \nu (2\pi m_0)^4.
\end{align*}

Note that the bounds above are independent of $L\geq 1$. Hence,
for any fixed $\epsilon>0$, and for $m_0$ fixed, there exists a $\delta(m_0)>0$, independent of $L$, such that if $\kappa_M+\kappa_T+\nu \leq \delta(m_0),$ then the real parts of $a_{\bk}, \ b_{\bk}, \ c_{\bk}$ and $d_{\bk}$ can be made smaller than $\frac\epsilon 2.$
Therefore, if we consider the matrix
\begin{equation}
M_{\bk}=
\begin{bmatrix}
a_{\bk}& b_{\bk}\\
c_{\bk} & d_{\bk}
\end{bmatrix},
\end{equation}
simple calculations will show that
\begin{align}
\mbox{tr}\,(M_{\bk})&= a_{\bk}+ d_{\bk} = -\frac{2\pi ik_1}L\left(1-2\beta \alpha_{\bk}\right) +\eta_1(\epsilon),\\
\mbox{det}\,(M_{\bk})&= \mbox{Re}\,(a_{\bk}d_{\bk} - c_{\bk}b_{\bk}) \notag \\
&=   \left(\frac {2\pi k_1}{L}\right)^2 \left[(\beta- 1/2)\alpha_{\bk} + (\alpha_{\bk}^2+\gamma_{\bk}^2)(\beta^2-1/4)\right]+\eta_2(\epsilon),
\end{align}
with $|\eta_i(\epsilon)|\leq \epsilon, \ i=1,2$, $\eta_i$ are independent of $L\geq 1$,
whenever $\kappa_M+\kappa_T+ \nu \leq \delta(k_0).$

Recall that the eigenvalues of $M_{\bk}$
are given by $1/2(\mbox{tr}\, (M_{\bk}) )\overline +\sqrt{\Delta_{\bk}}),$
where
\begin{equation}
\begin{aligned}
\Delta_{\bk}&= \mbox{Re}\,((\mbox{tr}\, (M_{\bk}))^2- 4 \mbox{det}\, (M_{\bk})).
\end{aligned}
\end{equation}
One can check that
\begin{align*}
\Delta_{\bk} &= -\left(\frac {2\pi k_1}{L}\right)^2 \left((1-2\beta\alpha_k)^2 + 4\left(\alpha_{\bk}(\beta-1/2) + (\gamma_{\bk}^2-\alpha_{\bk}^2)(\beta^4-1/4)\right)\right) + \eta_3(\epsilon)\\
&= \left(\frac{2\pi k_1}{L}\right)^2 \beta_{\bk}^2 \left( (1-4\beta^2) - \left(\frac{1-\alpha_{\bk}}{\beta_{\bk}}\right)^2 \right) + \eta_3(\epsilon),
\end{align*}
where $|\eta_3(\epsilon)|<\epsilon.$ Since $\frac{1-\alpha_{\bk}}{\beta_{\bk}}= 2\left(\frac12- \left(\frac{2\pi|k|}{L}\right)^4\right)$, then
\begin{align}
\Delta_{\bk} & = \left(\frac{2\pi k_1}{L}\right)^2 \beta_{\bk}^2 \left( (1-4\beta^2) - 4\left(\frac12- \left(\frac{2\pi|k|}{L}\right)^4\right)^2\right) + \eta_3(\epsilon).
\end{align}
Now, we observe that if we choose $|\beta| \ll1/4$, and choose $L\geq1$, large enough, such that
\begin{align}
\left(\frac{2\pi}{L}\right)^4 \sim \frac 3 8 \qquad \text{and} \qquad \left(\frac{2\pi k_0}{L}\right)^4 \sim \frac 1 2,
\end{align}
then, one can easily see that, under these conditions, $\Delta_{\bk}$ is non-negative. Therefore, one of the eigenvalues of $M_{\bk}$ will have a positive real part. This proves the following proposition.

\begin{proposition}
The linear system \eqref{linear_two_layer} is unstable for $\kappa_M, \kappa_T,$ and $\nu$ small, and $L$ large enough.
\end{proposition}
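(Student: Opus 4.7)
My plan is to exploit the Fourier diagonalization already performed: on each mode $\bk \in \Z$ the linearization decouples into a planar ODE $\dot{\mathbf{q}}_\bk = M_\bk \mathbf{q}_\bk$, so instability of \eqref{linear_two_layer} follows as soon as one exhibits a single wavenumber $\bk_0$ with nonzero first coordinate for which $M_{\bk_0}$ has an eigenvalue with strictly positive real part. Because
$$\lambda_\pm(M_\bk) \;=\; \tfrac{1}{2}\bigl(\mathrm{tr}\, M_\bk \pm \sqrt{(\mathrm{tr}\, M_\bk)^2 - 4\det M_\bk}\bigr),$$
and since $\mathrm{tr}\, M_\bk$ is, up to a small real error $\eta_1(\epsilon)$, purely imaginary, it is enough to produce a $\bk_0$ for which the real part of the radicand — which is precisely the quantity $\Delta_{\bk_0}$ computed in the excerpt — is bounded below by a positive constant much larger than the perturbative errors.

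The heart of the argument is then a \emph{resonance} choice of parameters based on the identity
$$\Delta_\bk \;=\; \Bigl(\tfrac{2\pi k_1}{L}\Bigr)^2 \beta_\bk^{\,2}\Bigl((1-4\beta^2) - 4\bigl(\tfrac{1}{2} - (\tfrac{2\pi |\bk|}{L})^4\bigr)^2\Bigr) + \eta_3(\epsilon).$$
First I would fix $|\beta| \ll 1/4$. Then, exploiting that the lattice $\{2\pi\bk/L : \bk \in \Z\}$ becomes dense as $L$ grows, I would pick $L$ large enough (with $(2\pi/L)^4 \sim 3/8$, as suggested in the excerpt) so that some $\bk_0$ with nonzero first coordinate satisfies $(2\pi|\bk_0|/L)^4$ arbitrarily close to $1/2$. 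For that $\bk_0$ the parenthesized factor is close to $1-4\beta^2>0$, so $\Delta_{\bk_0}\ge c_0 > 0$ up to the additive perturbation $\eta_3(\epsilon)$, where $c_0$ depends only on $\beta$ and on the quality of the approximation $(2\pi|\bk_0|/L)^4 \approx 1/2$.

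Having fixed $\bk_0$, I would set $m_0 := |\bk_0|$ and invoke the hypothesis $\kappa_M+\kappa_T+\nu \le \delta(m_0)$ to push each $\eta_i(\epsilon)$ below $c_0/2$. A short continuity estimate for the principal branch of the complex square root then shows that $\mathrm{Re}\sqrt{(\mathrm{tr}\,M_{\bk_0})^2 - 4\det M_{\bk_0}}$ stays close to $\sqrt{c_0}$, which dominates $|\mathrm{Re}\,\mathrm{tr}(M_{\bk_0})| \le \epsilon$ and forces $\mathrm{Re}\,\lambda_+(M_{\bk_0}) > 0$. The only genuine subtlety, and the step I would write out with care, is this last square-root continuity: the radicand is a complex number whose imaginary part is not controlled by $\Delta_{\bk_0}$ alone but also by $|\mathrm{Im}\det M_{\bk_0}|$ and by the small real part of $\mathrm{tr}\,M_{\bk_0}$, both of which vanish as the dissipation parameters tend to zero. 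Every other ingredient is already in place in the excerpt.
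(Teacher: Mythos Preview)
Your proposal is correct and follows essentially the same route as the paper: Fourier-diagonalize, examine the $2\times 2$ block $M_{\bk}$, pick $|\beta|\ll 1/4$ and $L$ large so that some $\bk_0$ with $k_1\neq 0$ satisfies $(2\pi|\bk_0|/L)^4\approx 1/2$, and then take the dissipation parameters small enough to make the $\eta_i(\epsilon)$ negligible. The paper is in fact terser than you on the final step---it simply asserts that $\Delta_{\bk}\ge 0$ forces an eigenvalue with positive real part---whereas you correctly isolate the square-root continuity issue; your observation that the imaginary part of the radicand also vanishes as $\kappa_M,\kappa_T,\nu\to 0$ (since $a_\bk,b_\bk,c_\bk,d_\bk$ are then purely imaginary, making $\det M_\bk$ real and $\mathrm{tr}\,M_\bk$ purely imaginary) is exactly the missing justification, so your write-up would be slightly more complete than the original.
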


\bigskip
\section{Function spaces and notation}
Let $\Omega=\left[-\frac{L}{2},\frac{L}{2}\right]^2$. We denote by ${\dot L}^2_{per}(\Omega)$ the Hilbert space of $\Omega-$periodic functions $f$ defined on ${\mathbb R}^2$ such that
$$
f|_\Omega\in L^2_{per}(\Omega) \quad \mbox{and } \ \int_{\Omega} f\,dxdy =0.
$$
Let $\|\cdot\|_{L^2}, \ (\cdot,\cdot)$ be the norm and inner product, respectively, in $L^2_{per}(\Omega).$ We also denote by ${\dot H}^s_{per}(\Omega), \ s\in \mathbb R$ the space of $\Omega -$periodic functions with zero mean characterized by their Fourier expansion:
\begin{align*}
&{\dot H}^s_{per}(\Omega)=\notag \\
&\left\{ u: u= \sum_{\bk\in {\Z}}u_{\bk} e^{2\pi i \frac{\bk\cdot {\bf x}}{L}}, \bar u_{\bk} =u_{-\bk}, \; \sum_{\bk\in {\Z}}\left(\frac {2\pi |\bk|}{L}\right)^{2s} |u_{\bk}|^2<\infty, \ u_{\bf 0}=0\right\}.
\end{align*}
We denote $A=-\Delta$, with domain $D(A)={\dot H}^2_{per}(\Omega).$ When $D(A)$ is considered as a subset of ${\dot L}^2_{per}(\Omega)$ with the $L^2$ topology, the operator $A: {\dot L}^2_{per}(\Omega)\to {\dot L}^2_{per}(\Omega)$ is an unbounded, self-adjoint, positive operator, with compact inverse $A^{-1}.$ For
$$
u({\bf x})= \sum_{\bk\in \Z}u_{ k}e^{2\pi i \frac{\bk\cdot  {\bf x}}{L}} \quad \in{\dot H}^2_{per}(\Omega) \cap {\dot H}^s_{per}(\Omega) ,
$$
we have
\begin{equation}
Au({\bf x})= \sum_{\bk\in \Z}\left(\frac{2\pi|\bk|}{L}\right)^2u_{\bk}e^{2\pi i \frac{\bk\cdot  {\bf x}}{L}}, \quad A^{\frac s2}u({\bf x})=\sum_{\bk\in \Z}\left(\frac{2\pi|\bk|}{L}\right)^{s}u_{\bk}e^{2\pi i \frac{\bk\cdot {\bf x}}{L}},
\end{equation}
for any $s\in \mathbb R$. The space ${\dot H}^s_{per}(\Omega)$ is a Hilbert space with the  norm
\begin{equation}
||u||_{H^s}^2:= \|A^{\frac s2}u\|_{L^2}^2= \sum_{\bk\in \Z}\left(\frac{2\pi|\bk|}{L}\right)^{2s} |u_{\bk}|^2< \infty.
\end{equation}
We also denote $\nH={\dot L}^2_{per}\times {\dot L}^2_{per}$ and $\nV^s={\dot H}^s_{per}\times {\dot H}^s_{per}$. We denote $A_y=-\frac{\partial^2}{\partial y^2}$ and its domain ${H}^s_{per}(dy)(\Omega)$ the space of functions that are characterized by:
\begin{align}
&{H}^s_{per}(dy)(\Omega)=\notag \\
&\left\{ u: u= \sum_{k\in {\nZ}}u_{k}(x)e^{2\pi i \frac{ky}{L}}, \; \sum_{k\in {\nZ}}\left(\frac {2\pi k}{L}\right)^{2s} |u_{k}(x)|^2<\infty, \text{for all }x\in\left[-\frac L2, \frac L2\right]\right\}.
\end{align}
For $u \in {H}^s_{per}(dy)$, we have
\begin{align}
A_y^{\frac s2}u({\bf x})=\sum_{k\in \nZ}\left(\frac{2\pi k}{L}\right)^{s}u_{k}(x)e^{2\pi i \frac{k{y}}{L}}.
\end{align}
We also define
\begin{align}
\|f(x,.)\|_{L^2_y}^2:= \int_{-\frac L2}^{\frac L2}f^2(x,y)\, dy,
\end{align}
and
\begin{align}
{\dot H}^s_y(\Omega) := \left\{ u: u \in H^s_{per}(dy), \int_{-\frac L2}^{\frac L2} u(x,y)\,dy = 0, \text{ for all } x\in\left[-\frac L2,\frac L2\right]\right\}.
\end{align}
This space is a Hilbert space with the norm $||u(x,.)||_{{\dot H}^s_y}^2:=  \|A_y^{\frac{s}{2}}u(x,.)\|^2_{L^2_y}$.
\begin{remark}
In this paper, $C$ represents a positive dimensionless scale-invariant constant that may change from line to line. $C_i$ represents a positive constant that may depend on the parameters $\beta$, $\kappa_M$, $\kappa_T$, $\nu$, $m$, and $L$.
\end{remark}
Let $s>0$, and assume that $(Q_1,Q_2)\in \nV^{s-2}$. Let $(\Psi_1, \Psi_2)$ be the unique solution of the elliptic system
\begin{align}
&Q_1=-A\Psi_1 -\frac 12(\Psi_1-\Psi_2),\qquad Q_2=-A\Psi_2 +\frac 12(\Psi_1-\Psi_2),\\
&\int_{\Omega}\Psi_i\, dxdy=0, \quad i=1,\, 2.
\end{align}
By elliptic regularity, one has $(\Psi_1, \Psi_2) \in \nV^s.$ Moreover, one can easily prove the existence of a scale-invariant positive constant $C$ such that, for  $i=1,2$, and $s\geq r\geq 0$,
\begin{align}
\|\Psi_i\|_{L^2} \leq CL \|A^{1/2} \Psi_i\|_{L^2} \leq CL^2 \|A \Psi_i\|_{L^2},\\
\quad \|A^r\Psi_i\|_{L^2} \leq CL^{2(s-r)}\|A^s\Psi_i\|_{L^2}, \label{Poincare}
\end{align}
and
\begin{align}
\frac{1}{C}\left(\|A \Psi_1\|_{L^2}^2+\|A \Psi_2\|_{L^2}^2\right)&\leq \|Q_1\|_{L^2}^2+\|Q_2\|_{L^2}^2 \notag \\
&\leq C(1+L^4)\left(\|A \Psi_1\|_{L^2}^2+\|A \Psi_2\|_{L^2}^2\right).\label{regularity}
\end{align}
We recall the following Sobolev interpolation inequality. For $\phi \in H^3_{per}(\Omega)$, there exists a positive scale-invariant constant $C$ such that
\begin{align}\label{Sobolev_interpolation}
\|A\phi\|_{L^2} \leq C \|A^{1/2}\phi\|_{L^2}\|A^{3/2}\phi\|_{L^2}.
\end{align}
We also recall the following Agmon's inequality in space dimension two (see, e.g., \cite{CF88}). For $\phi \in H^2_{per}(\Omega)$, there exists an absolute positive constant $C$ such that
\begin{equation}
\|\phi\|_{L^{\infty}} \leq C \|\phi\|_{L^2}^{1/2}\|\phi\|^{1/2}_{H^2}. \label{Agmon}
\end{equation}
We now define the trilinear form $b(\cdot , \cdot ,\cdot )$ by
\begin{equation}
b(\Psi, Q, \bar Q)=\int_{\Omega}J(\Psi,Q)\bar Q\, dxdy= \int_{\Omega}\left(\frac{\partial\Psi}{\partial x} \frac{\partial Q}{\partial y}-\frac{\partial\Psi}{\partial y}\frac{\partial Q}{\partial x}\right)\bar Q\,dxdy,
\end{equation}
whenever the integral makes sense. We have in particular the following property
\begin{equation}
b(\Psi, Q,Q)=0 \quad \text{for every } \Psi \in {\dot H}^2_{\text{per}}(\Omega)\
\text{and } Q\in {\dot H}^1_{\text{per}}(\Omega).
\end{equation}

\bigskip
\section{Global existence of solutions}
In this section we consider the slightly more general system of nonlinear equations than system \eqref{two_layer_3}
\begin{subequations}\label{two_layer_m}
\begin{align}
&\frac {\partial q_1}{\partial t} +J(\psi_1, q_1) =-\frac{\partial q_1}{\partial x}-(\beta +\frac 12)\frac{\partial \psi_1}{\partial x} +\kappa_T\hat\psi+\nu A^m\psi_1  \label{model-1r}\\
&\frac {\partial q_2}{\partial t} +J(\psi_2, q_2) =-(\beta -\frac 12)\frac{\partial \psi_2}{\partial x}-\kappa_M\triangle\psi_2-\kappa_T\hat\psi+\nu A^m\psi_2, \label{model-2r}\\
& q_1 = -A\psi_1 - \hat\psi, \qquad q_2 = -A\psi_2+\hat \psi, \qquad \hat\psi = \frac{1}{2}(\psi_1-\psi_2),\\
&q_1(\cdot, 0)=q_1^0, \quad q_2(\cdot, 0)=q_2^0,
\end{align}
\end{subequations}
where $m> 5/2$, is a given positive real number, and $(q^0_1,q_2^0)$ is given in $\nH$. This choice of $m$ implies that the dissipation in the model is slightly stronger than $(-\Delta)^{3/2} q_i,\ i=1,2,$ plus lower order terms. This system is an obvious generalization of system \eqref{two_layer_3}, in the sense that it contains a more general dissipative terms, which may or may not be symmetric.

First, we shall say few words about the existence of solutions of the nonlinear system \eqref{two_layer_m}. Following ideas from the theory of the 2D Navier-Stokes equations, it is not difficult to show that system \eqref{two_layer_m} is well-posed for all times (see e.g., \cite{OP1} and \cite{OP2}). However, a {direct proof}, which is based on energy estimates for $\psi_i$ and $q_i$, $i=1, 2$, for arbitrary large intervals of time, does not yield the uniform boundedness of solutions in time, and does not preclude a possible exponential growth in time of solutions. A refinement of the proof in the next section will show that  solutions are actually uniformly bounded in time, and the dynamical system associated with the two-layer problem admits a compact global attractor.
\begin{remark}
The arguments and the estimates in this section are formal and they can be justified rigorously following the usual Galerkin approximation procedure and passing to the limit using the appropriate compactness theorem of the {Aubin-Lions type} (see, e.g., \cite{CF88}, \cite{Lions}, \cite{T}).
\end{remark}

{It is clear} that:
\begin{align}
(\kappa_T \hat\psi, \psi_1) - (\kappa_T \hat\psi, \psi_2)= 2\kappa_T\|\hat \psi\|_{L^2}^2, \text{   and   } (\kappa_M A\psi_2, \psi_2) = \kappa_M \|A^{1/2}\psi_2\|_{L^2}^2.
\end{align}
Taking the $L^2$ inner product of \eqref{model-1r} with $-\psi_1$ and \eqref{model-2r} with $-\psi_2$ and adding the equations we obtain:

\begin{align}\label{Ee1}
\frac12 \frac{d}{dt} \left(\|A^{1/2} \psi_1\|^2 + \|A^{1/2}\psi_2\|_{L^2}^2 + 2\|\hat\psi\|_{L^2}^2\right) + &\nu\sum_{i=1,2} \|A^{m/2}\psi_i\|_{L^2}^2 + 2\kappa_T\|\hat\psi\|_{L^2}^2  \notag \\
&+ \kappa_M\|A^{1/2}\psi_2\|_{L^2}^2 =(\frac{\partial q_1}{\partial x},\psi_1).
\end{align}
Since by integration by parts we have $(\frac{\partial q_1}{\partial x}, \psi_1) = - (\frac{\partial \psi_1}{\partial x}, q_1)$, by the virtue of estimates \eqref{Poincare} and \eqref{regularity} we have
\begin{align}\label{energy_existence_two_layer_m}
&\frac12 \frac{d}{dt} \left(\|A^{1/2} \psi_1\|_{L^2}^2 + \|A^{1/2}\psi_2\|_{L^2}^2 + 2\|\hat\psi\|_{L^2}^2\right) +\nu\sum_{i=1,2} \|A^{m/2}\psi_i\|_{L^2}^2 \notag \\
& \qquad \qquad \leq \|A^{1/2}\psi_1\|_{L^2}\|q_1\|_{L^2}\notag \\
& \qquad \qquad \leq C(1+L^4)^{1/2}L^{m-2}  \|A^{1/2}\psi_1\|_{L^2}\|A^{m/2}\psi_1\|_{L^2}\notag \\
& \qquad \qquad \leq \frac{\nu}{2} \|A^{m/2}\psi_1\|_{L^2}^2 + \frac{C(1+L^4)L^{2m-4}}{2\nu} \|A^{1/2}\psi_1\|_{L^2}^2,
\end{align}
which can be written as
\begin{align}
&\frac{d}{dt} \left(\|A^{1/2} \psi_1\|_{L^2}^2 + \|A^{1/2}\psi_2\|_{L^2}^2 + 2\|\hat\psi\|_{L^2}^2\right) +\nu\sum_{i=1,2} \|A^{m/2}\psi_i\|_{L^2}^2 \notag \\
& \qquad \qquad \qquad \leq C_1\left(\|A^{1/2} \psi_1\|_{L^2}^2 + \|A^{1/2}\psi_2\|_{L^2}^2 + 2\|\hat\psi\|_{L^2}^2\right),
\end{align}
where $C_1= C_1(\nu, m, L)= \frac{C(1+L^4)L^{2m-4}}{\nu}$.
Therefore, given initial data $\psi_i^0 = \psi_i(0), i = 1,2$, in ${\dot H}^1_{per}$, and $T>0$ fixed, the solutions of system \eqref{two_layer_m} will satisfy
\begin{align}
\psi_i \in L^{\infty}(0,T; {\dot H}^1_{per}) \cap L^2(0,T; {\dot H}^m_{per}), \quad i =1,2.
\end{align}

Now, we observe that:
\begin{align}
\kappa_T (\hat \psi,q_1) - \kappa_T(\hat \psi, q_2) = \kappa_T(\hat \psi, q_1-q_2)& = -2\kappa_T(\hat \psi, A \hat \psi +  \hat \psi)\notag \\
& = -2\kappa_T\|A^{1/2} \hat \psi\|_{L^2}^2 -2\kappa_T\|\hat \psi\|_{L^2}^2,
\end{align}
and that
\begin{align}
\kappa_M(A \psi_2, q_2)  = \kappa_M(A \psi_2, -A \psi_2+\hat \psi)& = -\kappa_M\|A \psi_2\|_{L^2}^2 + \kappa_M(A \psi_2,\hat \psi)\notag \\
& \leq -\frac{\kappa_M}{2}\|A \psi_2\|_{L^2}^2 + \frac{\kappa_M}{2}\|\hat \psi\|_{L^2}^2,
\end{align}
as well as that
\begin{align}
(A^m\psi_1,q_1)+(A^m\psi_2,q_2) &= (A^m \psi_1, -A\psi_1-\hat \psi) + (A^m \psi_2,-A\psi_2+ \hat \psi) \notag \\
& = -\sum_{i=1,2} \|A^{(m+1)/2} \psi_i\|_{L^2}^2 - 2\|A^{m/2}\hat \psi\|_{L^2}^2.
\end{align}
Multiplying \eqref{model-1r} by $q_1$ and \eqref{model-2r}  {by $q_2,$ integrating over $\Omega,$ and adding} the equations we get:
\begin{align}\label{E2}
&\frac1 2\frac{d}{dt}\left(\|q_1\|_{L^2}^2+\|q_2\|_{L^2}^2\right)+ \nu\sum_{i=1,2}\|A^{(m+1)/2} \psi_i\|_{L^2}^2 + 2\nu \|A^{m/2}\hat \psi\|_{L^2}^2 +2\kappa_T\|A^{1/2}\hat \psi\|_{L^2}^2 \notag \\
&\qquad  \leq -(\beta+\frac12)\left(\frac{\partial \psi_1}{\partial x}, q_1\right)- (\beta-\frac12)\left(\frac{\partial \psi_2}{\partial x},q_2\right)+  \left(\frac{\kappa_M}{2}-2\kappa_T\right)\|\hat \psi\|_{L^2}^2.
\end{align}

Adding the equation \eqref{Ee1} and \eqref{E2} implies that
\begin{align}\label{E3}
&\frac1 2 \frac{d}{dt}  \left(\sum_{i=1,2}\|q_i\|_{L^2}^2+ \sum_{i=1,2}\|A^{1/2}\psi_i\|_{L^2}^2 + 2\|\hat \psi\|_{L^2}^2\right) \notag \\
&\qquad +  \nu\left(\sum_{i=1,2} \left(\|A^{(m+1)/2} \psi_i\|_{L^2}^2+\|A^{m/2} \psi_i\|_{L^2}^2\right)+ 2\|A^{m/2}\hat \psi\|_{L^2}^2\right) \notag \\
&\qquad \leq -(\beta+1)\left(\frac{\partial \psi_1}{\partial x},q_1\right) - (\beta-1)\left(\frac{\partial \psi_2}{\partial x},q_2\right) + \left|\frac{\kappa_M}{2}-2\kappa_T\right|\|\hat \psi\|_{L^2}^2\notag \\
&\qquad \leq (|\beta|+1) \sum_{i=1,2}\|A^{1/2} \psi_i\|_{L^2}\|q_i\|_{L^2} +\left|\frac{\kappa_M}{2}-2\kappa_T\right|\|\hat \psi\|_{L^2}^2\notag \\
& \qquad \leq \frac{1}{2} \sum_{i=1,2} \left((|\beta|+1)^2\|A^{1/2} \psi_i\|_{L^2}^2 +\|q_i\|_{L^2}^2\right)  +\left|\frac{\kappa_M}{2}-2\kappa_T\right|\|\hat \psi\|_{L^2}^2.
\end{align}
Then,
\begin{align}\label{enstrophy_existence_two_layer_m}
&\frac{d}{dt} \left(\sum_{i=1,2}\|q_i\|_{L^2}^2+ \sum_{i=1,2}\|A^{1/2}\psi_i\|_{L^2}^2 + 2 \|\hat \psi\|_{L^2}^2\right)\notag \\
 &\qquad \qquad \qquad \leq C_2\left(\sum_{i=1,2}\|q_i\|_{L^2}^2+ \sum_{i=1,2}\|A^{1/2}\psi_i\|_{L^2}^2 + 2 \|\hat \psi\|_{L^2}^2\right).
\end{align}
where $C_2 = C_2(\beta, L, \kappa_M, \kappa_T)$.

Therefore, given the initial data $q^0_i=q_i(0), i=1,2$, in ${\dot L}^2_{\text{per}}$, and $T>0$ fixed, the solutions of system \eqref{two_layer_m}
satisfy
$$
q_i\in L^{\infty}(0,T; {\dot L}^2_{\text{per}})\cap L^{2}(0,T;{\dot H}^{m-1}_{\text{per}}), \; i=1,2.
$$
The estimates \eqref{energy_existence_two_layer_m} and \eqref{enstrophy_existence_two_layer_m} play an essential role in proving the global existence, uniqueness, and the continuous dependence on initial data of solutions (see, for instance, \cite{CF88}, \cite{OP1}, \cite{OP2}, and \cite{T}).
In particular, this allows us to define the semigroup of solution operators
$$
S(t):\ (q^0_1,q_2^0)\in \nH \mapsto (q_1(t),q_2(t))\in \nH.
$$
We summarize the above results in the following:
\begin{theorem}
Let $(\psi_1^0,\psi_2^0)\in \nV^1$ be given. Then there exists a unique solution $(\psi_1,\psi_2)$ of system \eqref{two_layer_m} such that
$$
(\psi_1,\psi_2) \in C([0,T];\nV^1)\cap L^2(0,T; \nV^{m}), \quad \text{for all  } T>0.
$$
\end{theorem}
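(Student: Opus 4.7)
The plan is to establish global existence via a Galerkin truncation built from the Fourier basis, relying on the energy identities \eqref{energy_existence_two_layer_m} and \eqref{enstrophy_existence_two_layer_m} already derived formally. Let $P_n$ project onto the Fourier modes with $|\bk|\leq n$. For each $n$, after inverting the elliptic system \eqref{elliptic_two_layer} (which by \eqref{regularity} is a bounded, diagonal operation on truncated Fourier spaces), one obtains an ODE system in $P_n\nH$ for $(q_1^n,q_2^n)$ with polynomial nonlinearity, for which Picard--Lindel\"of gives local existence. Because Fourier truncation preserves the cancellation $b(\psi_i^n,q_i^n,q_i^n)=0$, the computations leading to \eqref{energy_existence_two_layer_m}--\eqref{enstrophy_existence_two_layer_m} carry over verbatim at the Galerkin level, and Gronwall furnishes a priori bounds for $(\psi_1^n,\psi_2^n)$ in $L^\infty(0,T;\nV^1)\cap L^2(0,T;\nV^m)$ that are uniform in $n$, extending the approximate solutions to $[0,T]$.

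To extract a convergent subsequence I would next bound $\partial_t q_i^n$ in $L^2(0,T;\nV^{-r})$ for a sufficiently large $r$, reading the bound off the equation directly and using the Sobolev embedding $H^{m-1}\hookrightarrow L^\infty$ (available since $m>5/2$ in dimension two) to dualize the Jacobian against smooth test functions after integration by parts $\int J(\psi^n_i,q^n_i)\,\phi = -\int J(\psi^n_i,\phi)\,q^n_i$. The Aubin--Lions lemma then yields, along a subsequence, strong convergence of $(\psi_1^n,\psi_2^n)$ in $L^2(0,T;\nV^s)$ for any $s<m$, weak convergence in $L^2(0,T;\nV^m)$, and weak-$*$ convergence in $L^\infty(0,T;\nV^1)$. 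This is enough to pass to the limit in the linear terms and in the Jacobian: the strong $\psi_i^n\to\psi_i$ in $L^2(0,T;H^2)$ combines with the weak $q_i^n\rightharpoonup q_i$ in $L^2(0,T;H^{m-1})$ to identify $J(\psi_i^n,q_i^n)\to J(\psi_i,q_i)$ in the sense of distributions.

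For uniqueness and continuous dependence, I take two solutions, subtract, and apply the enstrophy identity to the differences $\delta\psi_i,\delta q_i$. Writing $J(\psi_i,q_i)-J(\tilde\psi_i,\tilde q_i)=J(\delta\psi_i,q_i)+J(\tilde\psi_i,\delta q_i)$, the cancellation property kills $b(\tilde\psi_i,\delta q_i,\delta q_i)$ upon testing with $\delta q_i$, and the remaining term $b(\delta\psi_i,q_i,\delta q_i)$ is controlled by Agmon's inequality \eqref{Agmon}, interpolation \eqref{Sobolev_interpolation}, and \eqref{regularity}. This yields a differential inequality $\frac{d}{dt}E(t)\leq g(t)E(t)$ with $g\in L^1(0,T)$, thanks to the $L^2(0,T;\nV^m)$ bound already in hand, so Gronwall gives uniqueness and Lipschitz continuous dependence in $L^\infty(0,T;\nH)$. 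The continuity statement $(\psi_1,\psi_2)\in C([0,T];\nV^1)$ then follows from the standard Lions--Magenes argument, combining weak continuity in $\nV^1$ with continuity of the energy norm.

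The principal technical obstacle I anticipate is the time-derivative bound underlying the compactness step: the background-shear contribution $\partial q_1/\partial x$ together with the asymmetric mechanical drag $-\kappa_M\Delta\psi_2$ breaks the symmetry between the two layers, so the bookkeeping used for the 2D Navier--Stokes equations or for the classical multi-layer Phillips model of \cite{Ber} does not transfer mechanically. One must track each asymmetric contribution in the dual norm and absorb it against the hyperdissipation $\nu A^m$. The restriction $m>5/2$ is exactly what is needed to give $H^{m-1}\hookrightarrow W^{1,\infty}$ in two dimensions, after which the Jacobian can be estimated comfortably and closure of the argument is routine.
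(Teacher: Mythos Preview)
Your proposal is correct and follows precisely the approach the paper indicates: the paper derives the formal energy and enstrophy inequalities \eqref{energy_existence_two_layer_m}--\eqref{enstrophy_existence_two_layer_m} and then explicitly defers the rigorous justification to ``the usual Galerkin approximation procedure and passing to the limit using the appropriate compactness theorem of the Aubin--Lions type,'' citing \cite{CF88}, \cite{OP1}, \cite{OP2}, \cite{T}; you have simply spelled out that standard machinery.

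One small slip to flag: in your closing paragraph you assert that $m>5/2$ is ``exactly what is needed to give $H^{m-1}\hookrightarrow W^{1,\infty}$ in two dimensions,'' but that embedding requires $m-1>2$, i.e.\ $m>3$. Fortunately your argument does not actually use $W^{1,\infty}$: the time-derivative bound you wrote earlier relies only on $H^{m-1}\hookrightarrow L^\infty$ (valid for $m>2$, hence for $m>5/2$), and the uniqueness step goes through with Agmon's inequality \eqref{Agmon} and the regularity $q_i\in L^2(0,T;H^{m-1})$ without needing Lipschitz control. So the error is cosmetic and the core argument stands.
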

\begin{remark}
We remark that the above result was established by Onica and Panetta in \cite{OP1}. They gave a rigorous proof for the well-posedness of system \eqref{two_layer_m} in the space $\psi = (\psi_1,\psi_2) \in H^1_{per}\times H^1_{per}$, when $\kappa_T=0$.
\end{remark}
\begin{theorem}
Let $(q^0_1,q_2^0)\in \nH$ be given. Then there exists a unique solution $(q_1,q_2)$ of system \eqref{two_layer_m} such that
$$
(q_1,q_2) \in C([0,T];\nH)\cap L^2(0,T; \nV^{m-1}), \quad \text{for all  }T>0,
$$
and the semigroup
$$
S(t):\ (q^0_1,q_2^0)\in \nH\mapsto (q_1(t),q_2(t))\in \nH.
$$
is continuous from $\nH$ into $\nV^{m},$ for all $t>0.$
\end{theorem}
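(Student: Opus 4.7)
The plan is to construct the solution via a Faedo-Galerkin scheme, to establish uniqueness from a difference-equation estimate, and to obtain the instantaneous smoothing $\nH\to\nV^m$ via a time-weighted bootstrap. First I would project \eqref{two_layer_m} onto the span of the first $n$ eigenfunctions of $A$. Because the spectral projector commutes with $A$, preserves the elliptic relation $q_i\leftrightarrow\psi_i$, and $b(\psi_i^n,q_i^n,q_i^n)=0$, the estimate \eqref{enstrophy_existence_two_layer_m} carries over to the Galerkin solutions. Adding the dissipative gain $\nu\sum_i\|A^{(m-1)/2}q_i^n\|_{L^2}^2$ (available after invoking \eqref{regularity}) to both sides and using \eqref{Poincare} to absorb lower-order terms, Gronwall yields
\begin{equation*}
\sup_{[0,T]}\|q_i^n(t)\|_{L^2}^2+\nu\int_0^T\|A^{(m-1)/2}q_i^n(s)\|_{L^2}^2\,ds\leq K(T,\|q^0\|_\nH).
\end{equation*}
An estimate on $\partial_t q_i^n$ in the dual space $\nV^{-(m-1)}$, combined with the Aubin-Lions lemma, gives strong convergence in $L^2(0,T;\nV^{m-1-\varepsilon})$; since $m-1>3/2$, the embedding $\dot H^{m-1}_{per}\hookrightarrow L^\infty$ yields $q_i^n\to q_i$ in $L^2(0,T;L^\infty)$, which is ample for passing to the limit in the Jacobian. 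Weak continuity into $\nH$ is then upgraded to strong via the energy identity.

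For uniqueness and continuous dependence in $\nH$, subtract two solutions and pair the difference equation with $\tilde q_i=q_i^{(1)}-q_i^{(2)}$ in $L^2$. The antisymmetry of the Jacobian kills $b(\psi_i^{(1)},\tilde q_i,\tilde q_i)$, while the remaining contribution satisfies
\begin{equation*}
|b(\tilde\psi_i,q_i^{(2)},\tilde q_i)|=|b(\tilde\psi_i,\tilde q_i,q_i^{(2)})|\leq C\|A^{1/2}\tilde\psi_i\|_{L^2}\|A^{1/2}\tilde q_i\|_{L^2}\|q_i^{(2)}\|_{L^\infty},
\end{equation*}
after an integration by parts. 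Using \eqref{regularity} to bound $\|A^{1/2}\tilde\psi_i\|_{L^2}$ by $\|\tilde q_i\|_{L^2}$, interpolating $\|A^{1/2}\tilde q_i\|_{L^2}$ between $\|\tilde q_i\|_{L^2}$ and $\|A^{(m-1)/2}\tilde q_i\|_{L^2}$ (valid since $m-1>1$), and applying $\dot H^{m-1}_{per}\hookrightarrow L^\infty$ to $q_i^{(2)}$, Young's inequality absorbs a fraction of the $A^{(m-1)/2}\tilde q_i$ dissipation; the residual coefficient lies in $L^1(0,T)$ by the previously established $L^2(0,T;\nV^{m-1})$ bound. Gronwall then delivers Lipschitz dependence on initial data in $\nH$, hence uniqueness and the semigroup property.

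Finally, the smoothing $S(t)\colon\nH\to\nV^m$ is obtained by successive time-weighted bootstraps. Multiplying \eqref{model-1r}-\eqref{model-2r} by $-t^kA^{\sigma}\psi_i$ with appropriately chosen $k,\sigma$, adding, and integrating on $(0,t)$, the parabolic dissipation produces $\nu t^k\sum_i\|A^{(\sigma+m)/2}\psi_i\|_{L^2}^2$, while the loss from differentiating $t^k$ is of strictly lower order and is absorbed by the bound established at the previous level. The Jacobian remains strictly subcritical because $m>5/2$ lets us control it via Agmon \eqref{Agmon}, \eqref{Sobolev_interpolation}, and \eqref{regularity}, with the top Sobolev norm systematically dominated by the dissipation. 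Two such iterations suffice to reach $q_i(t)\in\dot H^{m}_{per}$ uniformly on compact subsets of $(0,\infty)$, with bounds depending only on $\|q^0\|_\nH$ and $t$. To promote the already-established Lipschitz dependence in $\nH$ to continuity $\nH\to\nV^m$, I would write $S(t)=S(t/2)\circ S(t/2)$: the first factor sends $\nH$ into a bounded subset of $\nV^m$ by the smoothing, and repeating the difference estimate in the $\nV^m$ norm for the second factor (whose initial data are now in $\nV^m$, so all required $L^\infty$ bounds are available) yields the desired continuity. The principal technical obstacle throughout is the shear term $\partial_x q_1$, whose first-order character is easily dominated by the dissipation of order $2(m-1)>3$ but whose coupling of $q$ to itself (rather than to $\psi$) forces repeated invocation of the elliptic correspondence \eqref{regularity} at each bootstrap level.
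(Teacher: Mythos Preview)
Your proposal is correct and is essentially a detailed fleshing-out of the approach the paper indicates: the paper does not give a self-contained proof of this theorem but instead derives the key \textit{a priori} estimates \eqref{energy_existence_two_layer_m}--\eqref{enstrophy_existence_two_layer_m} and then, via Remark~4.1 and the sentence following \eqref{enstrophy_existence_two_layer_m}, defers the rigorous construction (Galerkin approximation, Aubin--Lions compactness, uniqueness, smoothing) to the standard references \cite{CF88}, \cite{OP1}, \cite{OP2}, \cite{T}. Your Galerkin/compactness/difference-estimate/time-weighted-bootstrap outline is precisely the argument those references supply, so there is no substantive divergence from the paper---you have simply written out what the paper leaves implicit.
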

\begin{remark}
Further regularity results were established in \cite{OP2}; the authors proved the Gevrey regularity of solutions  following the work of Foias and Temam \cite{FT} and its generalization to nonlinear analytic parabolic equations in \cite{FeTi}.
\end{remark}

\bigskip
\section{Absorbing Sets and Attractors}
In this section, we will follow the Nicolaenko, Scheurer, and Temam (N-S-T) idea that was used in \cite{Nicolaenko-Scheurer-Temam} for the one-dimensional Kuramoto-Sivashinsky equation in the case of odd periodic solutions, to improve the estimates introduced in the previous section. We notice that if the initial conditions $q_1^0$ and $q_2^0$ are periodic functions, odd in the variable $y$, then the initial conditions $\psi_1^0$ and $\psi_2^0$ are  periodic functions which are also odd in the variable $y$. Moreover, one can easily check that if $q_1(t;x,y)$ and $q_2(t;x,y)$ are solutions for system \eqref{two_layer_m}, then $-q_1(t;x,-y)$ and $-q_2(t;x,-y)$ also satisfy the equations with the same initial values, $-q_1^0(x,-y)= q_1^0(x,y)$ and $-q_2^0(x,-y) = q_0^1(x,y)$. By the uniqueness of the solutions of system \eqref{two_layer_m} we conclude that $q_1(t;x;y) = -q_1(t;x,-y)$ and $q_2(t;x,y)=-q_2(t;x,-y)$. Consequently, the solutions $q_1$ and $q_2$ are odd in the variable $y$. {A similar argument} will show that $\psi_1$ and $\psi_2$ are also odd in the variable $y$. So the space of functions that are {periodic and odd  in the variable $y$} is invariant under the solutions of system \eqref{two_layer_m}. Thus, we will restrict ourselves in this and the following sections to the case that $\psi= (\psi_1,\psi_2)$ and $q = (q_1, q_2)$ are periodic functions which are odd periodic in the variable $y$ in $\Omega$. In this case, we will prove the existence of an absorbing ball (i.e., the solutions remain uniformly bounded in time) as well as the existence of a global attractor for system \eqref{two_layer_m}.

Following the N-S-T idea in \cite{Nicolaenko-Scheurer-Temam}, let $\bar\psi(y)$ be an odd ${C}^\infty$ $L$-periodic function on $\Omega$ that depends only on the $y$ variable that will be determined later. We set
\begin{align}
\psi_1(t;x,y) = \bar\psi(y) + \Psi_1(t;x,y), \quad q_1(t;x,y) = \bar{q}(y) + Q_1(t;x,y),
\end{align}
where
\begin{align}
\bar q(y) = -A_y\bar \psi(y) - \frac {\bar \psi(y)}{2},\quad Q_1(t;x,y) = -A\Psi_1(t;x,y) - \hat \Psi(t;x,y), \\
\hat \Psi(t;x,y)= \frac{1}{2}(\Psi_1(t;x,y) - \psi_2(t;x,y)).
\end{align}
After we substitute back, the nonlinear system \eqref{two_layer_m} in terms of $\Psi_1, \psi_2, Q_1$, and $q_2$ will become

\begin{subequations}\label{two_layer_m_NST_Q}
\begin{align}
&\frac {\partial Q_1}{\partial t} + J(\Psi_1, Q_1) + J(\Psi_1,\bar q) + J(\bar \psi, Q_1)= -\frac{\partial Q_1}{\partial x} -(\beta +\frac 12)\frac{\partial \Psi_1}{\partial x} +\kappa_T\hat\Psi \notag \\
&\qquad \qquad \qquad \qquad \qquad \qquad \qquad \qquad \qquad \;+\nu A^m\Psi_1 +g_1,\label {eq21} \\
&\frac {\partial q_2}{\partial t} +J(\psi_2, q_2) = -(\beta -\frac 12)\frac{\partial \psi_2}{\partial x}+\kappa_MA\psi_2 -\kappa_T\hat\Psi +\nu A^m\psi_2 + g_2, \label{eq22}\\
&g_1 = \frac 1 2 \kappa_T \bar\psi + \nu A_y^m\bar\psi,\qquad g_2 = -\frac 12 \kappa_T \bar \psi, \quad \bar q= -A_y \bar \psi- \frac {\bar \psi}{2}, \\
&Q_1 = -A\Psi_1- \hat \Psi(t;x,y),\quad q_2 = -A\psi_2 + \hat \Psi + \frac{1}{2}\bar\psi,\quad \hat \Psi= \frac{1}{2}(\Psi_1 - \psi_2),\\
&Q_1(0,\cdot)=Q_1^0 = q_1^0 - {\bar q} , \quad q_2(0, \cdot) =q_2^0.
\end{align}
\end{subequations}
Multiplying \eqref{eq21} by $-\Psi_1$ and \eqref{eq22} {by $-\psi_2,$ integrating over $\Omega,$ and adding the equations, we obtain}
\begin{align}\label{E1}
&\frac 12 \frac{d}{dt}\left( \|A^{1/2} \Psi_1\|_{L^2}^2 +  \|A^{1/2} \psi_2\|_{L^2}^2 + 2\|\hat\Psi\|_{L^2}^2 \right) + \nu \left(\|A^{m/2} \Psi_1\|_{L^2}^2 + \|A^{m/2}\psi_2\|_{L^2}^2 \right) \notag \\
&+2\kappa_T \|\hat \Psi\|_{L^2}^2 + \kappa_M\|A\psi_2\|_{L^2}^2 = b(\bar \psi, Q_1,\Psi_1) - \left(\frac{\partial \Psi_1}{\partial x}, Q_1\right) - (g_1,\Psi_1) -(g_2,\psi_1).
\end{align}
We set
$$
\bar\psi ^{'} (y) = -2 \sum _{k=1}^{M} \cos \left( \frac{ 2\pi k}{L} y\right) ,
$$
with $M$ to be chosen later. Then we have:
\begin{align}\label{nonlinearity}
b(\bar \psi, Q_1,\Psi_1) - \left(\frac{\partial \Psi_1}{\partial x} ,Q_1\right) & = \int_{\Omega}  (\bar\psi^{'}-1) \frac{\partial \Psi_1}{\partial x} Q_1 dxdy\notag \\
& = -\int_{\Omega} \left( 2\sum_{k=1}^{M} \cos \left( \frac{2\pi k}{L}y \right) +1 \right) \frac{\partial \Psi_1}{\partial x} Q_1dxdy \notag \\
& = -\int_{\Omega} \left(\sum_{|k|\leq M} e^{\frac{2\pi ik}{L}y}\right) \frac{\partial \Psi_1}{\partial x} Q_1 dxdy \notag \\
& = -\int_{-\frac L2}^{\frac L2} \left[ \sum_{|k|\leq M} \int_{-\frac L 2}^{\frac L 2} e^{\frac{2\pi ik}{L}y} \frac{\partial \Psi_1}{\partial x} Q_1dy \right]dx.
\end{align}
For $(x, y) \in \Omega$, we define
\begin{align}
w(x,y) := \frac{\partial \Psi_1}{\partial x} (x,y)Q_1(x,y);
\end{align}
then
\begin{align}
w(x,y) = \sum_{k \in {\nZ}}w_k(x) e^{-\frac{2\pi ik}{L}y},  \quad \text{with} \quad
w_k(x) = \frac{1}{L} \int_{-\frac L 2}^{\frac L 2} e^{\frac{2\pi ik}{L}y} w(x,y) dy.
\end{align}
Notice that $\frac{\partial \Psi_1}{\partial x}(x,0) = Q_1(x,0) = 0$, for all $x\in[-\frac L2, \frac L 2]$, so $w(x,0) =0$ for all $x\in[-\frac L2, \frac L 2]$. Thus we have for $s>\frac 12$
(see section 4.1, Chapter $3$ in \cite{T}) :
\begin{align}
\left| \sum_{|k|\leq M} w_k(x) \right| &= \left| \sum _{|k|>M} w_k(x)\right|\notag \\
& \leq  \left( \sum_{|k|>M} \left(\frac{2\pi k}{L}\right)^{2s}w_k(x)\right)^{1/2} \left( \sum_{|k|>M} \left (\frac{2\pi k}{L} \right)^{-2s}\right)^{1/2}\notag \\
& \leq C L^{(s-1/2)}M^{(1/2-s)} \|A^{\frac s2}_yw(x,.)\|_{L^2_y},
\end{align}
and since $H^s_{per}(dy)(\Omega)$ is an algebra for $s>1/2$, and since $\frac{\partial \Psi_1}{\partial x}$ and $Q_1$ are odd periodic functions in the variable $y$, we have
\begin{align}
\left| \sum_{|k|\leq M} w_k(x) \right| & \leq C L^{2s-1} M^{(1/2-s)}\left\|\frac{\partial \Psi_1}{\partial x}(x,.)\right\|_{{\dot H}^s_y}\|Q_1(x,.)\|_{{\dot H}^s_y}.
\end{align}
We may choose $s=m-2>1/2$ and conclude from \eqref{nonlinearity}, using the estimates \eqref{Poincare} and \eqref{regularity}, that
\begin{align}
&\left|b(\bar \psi, Q_1,\Psi_1) - \left(\frac{\partial \Psi_1}{\partial x} ,Q_1\right)\right| \notag \\
& \qquad \qquad \leq C L^{2m-5} M^{(5/2-m)}\int_{\frac L2}^{\frac L2}\left\|A^{\frac{(m-2)}{2}}_yQ_1(x,.)\right\|_{L^2_y}\left\|A^{\frac{(m-2)}{2}}_y\frac{\partial \Psi_1}{\partial x}(x,.)\right\|_{L^2_y}\,dx  \notag\\
&\qquad \qquad \leq C L^{2m-5} M^{(5/2-m)}\left\|A^{\frac{(m-2)}{2}}_yQ_1\right\|_{L^2}\left\|A^{\frac{(m-2)}{2}}_y\frac{\partial \Psi_1}{\partial x}\right\|_{L^2}\,dx  \notag\\
& \qquad \qquad \leq CL^{2m-4}(1+L^4)^{1/2}M^{5/2-m}\|A^{m/2}\Psi_1\|_{L^2}^2.
\end{align}
Since $m>5/2$, then we may choose $M = M(L,m)$ large enough such that:
\begin{align}\label{M_two_layer_m}
 CL^{2m-4}(1+L^4)^{1/2}M^{5/2-m} < \frac{\nu}{4},
\end{align}
and thus
\begin{align}
b(\bar \psi, Q_1,\Psi_1) - \left(\frac{\partial \Psi_1}{\partial x} ,Q_1\right)&\leq \frac {\nu}{4}\|A^{m/2}\Psi_1\|_{L^2}^2.
\end{align}
Notice also that
\begin{align}
(g_1,\Psi_1) + (g_2,\psi_2) &= \frac 12\kappa_T(\bar \psi, \Psi) + \nu (A_y^m\bar \psi, \Psi) - \frac 12 \kappa_T(\bar \psi, \psi_2)\notag \\
& = \kappa_T(\bar \psi, \hat \Psi) + \nu(A^{m/2} \bar\psi, A_y^{m/2}\Psi_1)\notag \\
& \leq \frac{\kappa_T}{2}\|\bar \psi\|_{L^2}^2 + \frac{\kappa_T}{2}\|\hat\Psi\|_{L^2}^2 + \nu \|A_y^{m/2}\bar \psi\|_{L^2}^2 + \frac \nu4 \|A^{m/2}\Psi_1\|_{L^2}^2.
\end{align}
As a result, \eqref{E1} can be rewritten as
\begin{align}\label{ee1}
&\frac{d}{dt}\left(\|A^{1/2}\Psi_1\|_{L^2}^2 + \|A^{1/2} \psi_2\|_{L^2}^2 + 2\|\hat\Psi\|_{L^2}^2 \right) + \nu \left(\|A^{m/2} \Psi_1\|_{L^2}^2 + \|A^{m/2}\psi_2\|_{L^2}^2 \right) \notag \\
&\qquad \qquad \qquad \qquad \qquad \qquad + 2\kappa_T \|\hat \Psi\|_{L^2}^2  \leq \kappa_T \|\bar\psi\|_{L^2}^2 + 2\nu\|A_y^{m/2}\bar \psi\|_{L^2}^2.
\end{align}
We set
\begin{align}
E(t)& =  \|A^{1/2} \Psi_1\|_{L^2}^2(t) + \|A^{1/2} \psi_2\|_{L^2}^2(t) + 2\|\hat\Psi\|_{L^2}^2(t),
\end{align}
By \eqref{Poincare} and \eqref{ee1} we conclude that
\begin{align}\label{energy_two_layer_m}
\frac{d}{dt} E(t) + C_3E(t) &\leq \bar\gamma,
\end{align}
where,
\begin{align}
C_3 &= C_3(\nu,L,m,\kappa_T)  = min\left\{\kappa_T, \frac{\nu}{CL^{2(m-1)}}\right\}, \label{C1_two_layer_m} \\
\bar\gamma &= \bar\gamma(\|\bar \psi\|_{L^2}, \|A_y^{m/2}\bar\psi\|_{L^2},\nu,\kappa_T)  = \kappa_T\|\bar\psi\|_{L^2}^2 + 2\nu\|A_y^{m/2}\bar \psi\|_{L^2}^2. \label{bar_gamma_two_layer_m}
\end{align}
This proves the existence of an absorbing ball for system \eqref{two_layer_m} associated with periodic solutions which are odd in the $y$ variable and initial condition $(\psi_1^0, \psi_2^0)\in \nV^1$.

We will now prove the existence of an absorbing ball for system \eqref{two_layer_m} associated with periodic solutions which are odd in the variable $y$ and initial data $(q_1^0,q_2^0)\in \nH$.
From \eqref{E3} and by the interpolation inequality \eqref{Sobolev_interpolation}, the estimate \eqref{Poincare}, and Young's inequality we have
\begin{align}\label{q_estimate_two_layer}
&\frac1 2 \frac{d}{dt}  \left(\sum_{i=1,2}\|q_i\|_{L^2}^2+ \sum_{i=1,2}\|A^{1/2}\psi_i\|_{L^2}^2 + 2\|\hat \psi\|_{L^2}^2\right) \notag \\
&\qquad +  \nu\left(\sum_{i=1,2} \left(\|A^{(m+1)/2} \psi_i\|_{L^2}^2+\|A^{m/2} \psi_i\|_{L^2}^2\right)+ 2\|A^{m/2}\hat \psi\|_{L^2}^2\right) \notag \\
& \qquad \leq \frac{1}{2} \sum_{i=1,2} \left((|\beta|+1)^2\|A^{1/2} \psi_i\|_{L^2}^2 +\|q_i\|_{L^2}^2\right)  +\left|\frac{\kappa_M}{2}-2\kappa_T\right|\|\hat \psi\|_{L^2}^2\notag \\
& \qquad \leq \left((\beta^2+1) + \frac{C(1+L^4)^2L^{2(m-3)}}{\nu} +CL^{2m}\left|\frac{\kappa_M}{2}-2\kappa_T\right|\right)\sum_{i=1,2}\|A^{1/2}\psi_i\|_{L^2}^2 \notag \\
& \qquad \qquad \qquad \qquad + \frac \nu 4 \sum_{i=1,2}\|A^{m/2}\psi_i\|_{L^2}^2.
\end{align}
Set
\begin{align}
W(t) &= \sum_{i=1,2}\|q_i\|_{L^2}^2(t)+  \sum_{i=1,2}\|A^{1/2}\psi_i\|_{L^2}^2(t) + 2\|\hat \psi\|_{L^2}^2(t).
\end{align}
Then, the inequality \eqref{q_estimate_two_layer}  together with the estimates \eqref{Poincare} and \eqref{regularity} imply that
\begin{align}\label{enstrophy_two_layer_m}
&\frac{d}{dt} W(t)+  C_4W(t) \leq  C_5\sum_{i=1,2}\|A^{1/2}\psi_i\|_{L^2}^2,
\end{align}
where
\begin{align}
C_4 &= C_4(\nu,L,m) = min\left\{\frac{\nu}{C(1+L^4)L^{2(m-1)}}, \frac{\nu}{CL^{2m}}\right\}, \label{C2_two_layer_m}\\
C_5 & = C_5(\nu,L,m,\kappa_T,\kappa_M, \beta)\notag \\
& = \left((\beta^2+1) + \frac{C(1+L^4)^2L^{2(m-3)}}{\nu} +CL^{2m}\left|\frac{\kappa_M}{2}-2\kappa_T\right|\right). \label{C4_two_layer_m}
\end{align}
Now we choose $C_6 = C_6(\nu,L,m,\kappa_T) = min\{C_3,C_4\}$: then estimate \eqref{energy_two_layer_m} and estimate \eqref{enstrophy_two_layer_m} yield
\begin{align}
&\frac{d}{dt} E(t) + C_6 E(t) \leq \bar\gamma, \label{energy_two_layer_C} \\
&\frac{d}{dt} W(t)+  C_6 W(t) \leq C_5\sum_{i=1,2}\|A^{1/2}\psi_i\|_{L^2}^2.\label{enstrophy_two_layer_C}
\end{align}
Applying Gronwall's Lemma to \eqref{energy_two_layer_C} implies that:
\begin{align}\label{Es1}
E(t) \leq E(0) e^{-C_6t} + \frac{\bar\gamma}{C_6}(1-e^{-C_6t}).
\end{align}
Since $\psi_1= \bar\psi + \Psi_1$, then we have:
\begin{align}
\|A^{1/2}\psi_1(t)\|_{L^2}^2 \leq 2\|A^{1/2}\Psi_1(t)\|_{L^2}^2 + 2\|A_y^{1/2}\bar\psi\|_{L^2}^2,
\end{align}
for all $t\geq0$. Consequently,

\begin{align}\label{L2_ball_psi_two_layer_m}
\|A^{1/2}\psi_1(t)\|_{L^2}^2+\|A^{1/2}\psi_2(t)\|_{L^2}^2 &\leq 2\|A_y^{1/2}\bar\psi\|_{L^2}^2+ 2E(0) e^{-C_6t} + \frac{2\bar\gamma}{C_6}(1-e^{-C_6t}),
\end{align}
for all $t\geq 0$. From \eqref{enstrophy_two_layer_C} and \eqref{L2_ball_psi_two_layer_m}  we have
\begin{align*}
\frac{d}{dt} W&(t)+  C_6W(t) \leq C_5\left(E(0) e^{-C_6t}+ \frac{2\bar\gamma}{C_6}(1-e^{-C_6t})\right).
\end{align*}
Applying Gronwall's Lemma yields
\begin{align*}
&W(t) \leq  W(0)e^{-C_6t} \leq C_5\left(E(0)te^{-C_6t} + \frac{2\bar\gamma}{C_6^2}(1-e^{-C_6t} -C_6te^{-C_6t})\right),
\end{align*}
for all $t\geq0$. Then we conclude that:
\begin{align*}
&\sum_{i=1,2} \|q_i\|_{L^2}^2(t)  \leq W(0)e^{-C_6t} + C_5\left(E(0)te^{-C_6t} + \frac{2\bar\gamma}{C_6^2}(1-e^{-C_6t} -C_6te^{-C_6t})\right),
\end{align*}
for all $t\geq0$. Consequently, we obtain:
\begin{align}
\limsup_{t\rightarrow \infty} \sum_{i=1,2}\|q_i\|_{L^2}^2(t) \leq \frac{2\bar \gamma}{C_6^2} C_5=:\rho^2.
\end{align}
This proves that the ball centered at $0$ with radius $2\rho$ in $\nH$ is an absorbing ball for system \eqref{two_layer_m} when $(q_1^0, q_2^0) \in \nH$.
%
Moreover,  by the estimates \eqref{Poincare} and \eqref{regularity}, inequality  \eqref{enstrophy_two_layer_C} and inequality \eqref{L2_ball_psi_two_layer_m} imply
\begin{align*}
&\frac{d}{dt} W(t)+ \frac{\nu}{C(1+L^4)L^{2(m-2)}}\sum_{i=1,2}\|A^{1/2}q_i\|_{L^2}^2\notag \\
& \qquad \qquad \qquad  \qquad \qquad  \leq 2C_5\left(\|A^{1/2}\bar\psi\|_{L^2}^2+ E(0) e^{-C_6t} + \frac{\bar\gamma}{C_6}(1-e^{-C_6t})\right).
\end{align*}
After we integrate with respect to time we get:
\begin{align*}
&\frac{\nu}{C(1+L^4)L^{2(m-2)}}\int_0^t  \sum_{i=1,2}\|A^{1/2}q_i\|_{L^2}^2(s)\, ds \leq \notag \\
& \qquad \qquad \qquad \qquad  W(0)+ 2C_5\int_0^t \left(\|A^{1/2}\bar\psi\|_{L^2}^2+ 2E(0) e^{-C_6s} + \frac{\bar\gamma}{C_6}(1-e^{-C_6s})\right)\, ds.
\end{align*}
Thus, we conclude that
\begin{align}\label{two_layer_m_zeta}
&\limsup_{t\rightarrow\infty} \frac{1}{t}\int_0^t \sum_{i=1,2}\|A^{1/2}q_i\|_{L^2}^2(s)\, ds  \notag \\
& \qquad \qquad \qquad \qquad \leq \frac{CC_5(1+L^4)L^{2(m-2)}}{\nu} (\left(\|A^{1/2}\bar\psi\|_{L^2}^2 + \frac{\bar\gamma}{C_6}\right).
\end{align}
Using the general theory of existence of attractors (see for instance \cite{CF88}, and \cite{T}) and the estimates above, we show the existence of a compact global attractor for the dynamical system associated to our model. The existence of an absorbing ball for the system in $L^2_{per}(\Omega)$ together with the fact that the semigroup that generates the flow
$$
S(t):\ (q^0_1,q_2^0)\in \nH\mapsto (q_1(t),q_2(t))\in \nH
$$
is continuous from $\nH$ into $\nH$, for all $t>0$ implies the existence of a global attractor ${\mathcal A}$ for the system in $L^2_{per}(\Omega)$. Since our system is dissipative, i.e. the semigroup $S(t)$ is  continuous from $\nH$ into $\nV^{m-1}$, and since by Rellich Lemma $\nV^{m-1}$ is a compactly embedded in $\nH$, the global attractor ${\mathcal A}$ is compact in $\nH$. Moreover, the global attractor $\mathcal A$ is connected as well. (For a complete discussion of global attractors we refer the readers to \cite{CF88}, \cite{T}, and the references therein).
\begin{theorem} The dynamical system induced by \eqref{two_layer_m} associated with periodic solutions which are odd in the variable $y$ in $\Omega$ and initial data $(q_1^0, q_2^0) \in \nH$ possesses a global attractor ${\mathcal A}$ which is maximal, connected, and compact in $\nH$.
\end{theorem}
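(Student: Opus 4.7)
The plan is to deduce the theorem from the general theory of global attractors for dissipative semigroups, using as input the absorbing-ball estimate and the smoothing property of $S(t)$ both established earlier in the section. We already have, via the Gronwall argument on $W(t)$, a ball $\mathcal{B}_0\subset \nH$ of radius $2\rho$ centered at $0$ such that for every bounded set $B\subset \nH$ there exists $t_0(B)\geq 0$ with $S(t)B\subset \mathcal{B}_0$ for all $t\geq t_0(B)$. This supplies the dissipativity hypothesis.

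For asymptotic compactness, the key ingredient is that $S(t)\colon \nH\to \nV^{m-1}$ is continuous for every $t>0$, which is the content of the last theorem of section 4 together with the time-averaged bound \eqref{two_layer_m_zeta} and a short bootstrap via equations \eqref{model-1r}--\eqref{model-2r}. Since $m-1>3/2>0$, the embedding $\nV^{m-1}\hookrightarrow \nH$ is compact by the Rellich lemma. Hence, fixing any $\tau>0$, the set $S(\tau)\mathcal{B}_0$ is bounded in $\nV^{m-1}$, and therefore relatively compact in $\nH$; in particular the semigroup is uniformly compact for large times on bounded subsets of $\nH$.

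With the absorbing ball and the uniform compactness at hand, the existence of the global attractor
\begin{equation*}
\mathcal{A}=\bigcap_{s\geq 0}\overline{\bigcup_{t\geq s}S(t)\mathcal{B}_0}\quad\text{(closure in }\nH\text{)}
\end{equation*}
follows from the standard abstract result (see e.g.\ Theorem I.1.1 in \cite{T} and the framework of \cite{CF88}). By construction $\mathcal{A}$ is compact in $\nH$, invariant under $S(t)$ for all $t\geq 0$, attracts every bounded subset of $\nH$, and is maximal among the bounded invariant subsets of $\nH$.

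Finally, connectedness is inherited from the convexity of $\mathcal{B}_0$: since the closed ball is connected and each $S(t)$ is continuous on $\nH$, each image $S(t)\mathcal{B}_0$ is connected, the union $\bigcup_{t\geq s}S(t)\mathcal{B}_0$ is connected, and so is its closure; the attractor is then the nested intersection of a decreasing family of connected compact subsets of a metric space, hence connected. The only genuinely delicate verification in this plan is the smoothing estimate $S(t)\mathcal{B}_0\subset$ bounded set of $\nV^{m-1}$, uniformly on $\mathcal{B}_0$, for $t\geq\tau>0$; this is the main technical obstacle, but it is handled by combining the absorbing property of $W(t)$ with the time-integrated $H^1$ bound for $q$ proved in \eqref{two_layer_m_zeta}, iterating the energy inequalities at the next regularity level, and then using a uniform Gronwall argument on the resulting differential inequality.
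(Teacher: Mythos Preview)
Your proposal is correct and follows essentially the same route as the paper: combine the absorbing ball in $\nH$ with the smoothing property $S(t)\colon \nH\to \nV^{m-1}$ (stated in the second theorem of section~4), invoke the compact embedding $\nV^{m-1}\hookrightarrow \nH$ via Rellich, and appeal to the abstract attractor theory in \cite{CF88}, \cite{T} for maximality, compactness, and connectedness. You supply more detail than the paper does---the explicit $\omega$-limit formula, the connectedness argument via nested connected compacta, and the remark that one needs \emph{uniform} boundedness of $S(\tau)\mathcal{B}_0$ in $\nV^{m-1}$ rather than mere continuity---but the underlying strategy is identical.
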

Moreover, by a similar argument as above and inequality \eqref{energy_two_layer_m}, which implies the existence of an absorbing ball of system \eqref{two_layer_m} associated with periodic solutions which are odd in the variable $y$ and initial data $(\psi_1^0, \psi_2^0)\in \nV^1$, we conclude the following theorem.
\begin{theorem} The dynamical system induced by \eqref{two_layer_m} associated with periodic solutions which are odd in the variable $y$ in $\Omega$ and initial data $(\psi_1^0, \psi_2^0)\in \nV^1$ possesses a global attractor ${\mathcal A}$ which is maximal, connected, and compact in $\nV^1$.
\end{theorem}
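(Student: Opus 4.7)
The strategy is to verify, on the phase space $\nV^1$ restricted to $y$-odd functions, the three standard ingredients for the existence of a compact global attractor: a bounded absorbing set, continuity of the semigroup $S(t)$, and asymptotic compactness. The argument parallels the proof of the preceding theorem on the phase space $\nH$, but now the dissipative estimate \eqref{energy_two_layer_m} plays the central role rather than \eqref{enstrophy_two_layer_m}.

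The bounded absorbing set in $\nV^1$ comes directly from the inequality \eqref{energy_two_layer_m}, namely $\frac{d}{dt}E(t)+C_3 E(t)\leq\bar\gamma$. By Gronwall's lemma one has $\limsup_{t\to\infty}E(t)\leq \bar\gamma/C_3$, and since $\psi_1=\bar\psi+\Psi_1$ with $\bar\psi$ independent of $t$, it follows that
\[
\limsup_{t\to\infty}\bigl(\|A^{1/2}\psi_1(t)\|_{L^2}^2+\|A^{1/2}\psi_2(t)\|_{L^2}^2\bigr)\leq \frac{2\bar\gamma}{C_3}+2\|A_y^{1/2}\bar\psi\|_{L^2}^2,
\]
so every ball in $\nV^1$ of slightly larger radius absorbs every bounded set of $y$-odd initial data. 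Continuity of $S(t):\nV^1\to\nV^1$ follows from the well-posedness theorem above applied to the difference of two solutions, controlling the bilinear Jacobian terms via Agmon's inequality \eqref{Agmon} and the interpolation estimate \eqref{Sobolev_interpolation}.

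The nontrivial ingredient is asymptotic compactness, and the plan is to upgrade the absorbing ball from $\nV^1$ to a stronger Sobolev space $\nV^s$ with $1<s\leq m$, after which Rellich's compact embedding $\nV^s\hookrightarrow\nV^1$ delivers compactness of the $\omega$-limit sets. Integrating \eqref{ee1} over $[t,t+1]$ gives
\[
\nu\int_t^{t+1}\sum_{i=1,2}\|A^{m/2}\psi_i(\tau)\|_{L^2}^2\,d\tau\leq E(t)+\bar\gamma+2\nu\|A_y^{m/2}\bar\psi\|_{L^2}^2,
\]
which, combined with the already established pointwise boundedness of $E(t)$, furnishes a uniform-in-$t$ time-averaged bound in $\nV^m$. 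To turn this into a pointwise-in-$t$ absorbing set in $\nV^s$, I would pair \eqref{eq21}--\eqref{eq22} with $A^{2s-1}\Psi_1$ and $A^{2s-1}\psi_2$ respectively, bound the nonlinear and linear terms through \eqref{Agmon} and \eqref{Sobolev_interpolation}, and close the resulting differential inequality by means of the uniform Gronwall lemma.

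The main obstacle sits in this higher-order estimate: exactly as in Section 5, the combination $b(\bar\psi,Q_1,A^{2s-1}\Psi_1)-(\partial_x Q_1,A^{2s-1}\Psi_1)$ does not vanish after integration by parts and must be controlled by the Fourier truncation device that produced \eqref{M_two_layer_m}. That device still applies because $\bar\psi'(y)-1$ has only high-frequency content and the hyperdissipation $\nu A^m$ with $m>5/2$ dominates the resulting negative-order Sobolev tail, at the price of choosing the truncation parameter $M$ possibly larger than before. Once an absorbing set in $\nV^s$ is secured, the existence of a maximal, connected, compact global attractor $\mathcal{A}\subset\nV^1$ follows from the standard theory of attractors for dissipative semigroups invoked in the preceding theorem.
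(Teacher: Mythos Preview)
Your overall strategy---absorbing ball in $\nV^1$, continuity of $S(t)$, and compactness via an absorbing set in a stronger space together with Rellich's embedding---is correct and matches the paper's (very terse) ``by a similar argument as above.'' The paper essentially just points to \eqref{energy_two_layer_m} for the absorbing ball and leaves the compactness to the reader, relying on the parabolic smoothing already asserted in Theorem~4.3.

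Where you overcomplicate matters is in what you call ``the main obstacle.'' You propose to redo the Fourier truncation device at higher order to control $b(\bar\psi,Q_1,A^{2s-1}\Psi_1)-(\partial_x Q_1,A^{2s-1}\Psi_1)$. This is unnecessary: the translation $\psi_1=\bar\psi+\Psi_1$ was introduced \emph{only} to handle the $\nV^1$ energy estimate, because there the term $(\partial_x q_1,\psi_1)$ does not vanish. At the enstrophy level and above one should revert to the \emph{original} equations \eqref{model-1r}--\eqref{model-2r} and test with $q_1,q_2$ (or $A^s q_i$), where $(\partial_x q_1,A^s q_1)=0$ by integration by parts. This is exactly what the paper does in \eqref{E2}--\eqref{E3} and \eqref{q_estimate_two_layer}: the $\nH$ estimate for $q$ uses no translation at all, taking the $\nV^1$ bound on $\psi$ as input on the right-hand side.

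Consequently, the shortest route to compactness is to observe that the absorbing ball in $\nH$ for $q$ established in the preceding theorem is, via \eqref{regularity}, an absorbing ball in $\nV^2$ for $\psi$, and $\nV^2\hookrightarrow\nV^1$ compactly. The only gap is that the preceding theorem assumed $q^0\in\nH$, whereas here $\psi^0\in\nV^1$ only; but the $L^2(0,1;\nV^m)$ bound you already extracted from \eqref{ee1} gives $\psi(t_0)\in\nV^m\subset\nV^2$ (hence $q(t_0)\in\nH$) for some $t_0\in(0,1)$, with norm controlled by $E(0)$, after which the $\nH$ theory applies. No new high-order estimates, and no second use of the truncation trick, are required.
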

We can also prove the existence of an absorbing set in $\nH^s, \ s=1, 2, 3, \dots,$
using similar methods as those in, for example (\cite{T}), but
we leave the details to the reader.
Our next step is to obtain an upper bound on the Hausdorff and fractal
dimensions of the global attractor $\mathcal A.$ We can also prove the
existence of an absorbing ball in Gevrey space following \cite{FT}
and the generalization presented in \cite{FeTi}. In particular, one can show that the solutions in the attractor are spatially analytic, and have their Fourier coefficients decay exponentially fast.

\bigskip
\section{Upper bound on the dimension of the attractor}
We now turn our attention to estimating the dimension of the global attractor of
the dynamical system associated with \eqref{two_layer_m}.
We start with the study of the linearized equations. System \eqref{two_layer_m}  can be written in the form:
\begin{equation}
\frac{\partial q}{\partial t}+{\bf J}(Rq, q)+L_1q+L_2q-\nu A^m Rq=0 , \quad q(0)= q_0,
\label{abs_two_layer_m}
\end{equation}
where $q=(q_1,q_2)^{T}$, and  $Rq =R(q_1,q_2): =\psi=(\psi_1,\psi_2)^T$, is defined to be the unique solution of the coupled elliptic system
\begin{align}
& q_1 = -A\psi_1 -\hat\psi, \qquad q_2 = -A\psi_2 + \hat\psi, \qquad \hat\psi = \frac{1}{2}(\psi_1-\psi_2),\\
& \int_{\Omega} \psi_1 \,dxdy =0, \quad \int_{\Omega}\psi_2\, dxdy = 0;
\end{align}
and ${\bf J}(\psi, q) = (J(\psi_1,q_1),J(\psi_2,q_2))$, and the linear operators $L_1$ and $L_2$ are defined by:
\begin{align}
L_1q = \left(\frac{\partial q_1}{\partial x} -\kappa_T\hat\psi, -\kappa_MA \psi_2+\kappa_T\hat\psi \right)^{T},\;
L_2q = \left((\beta +\frac 12)\frac{\partial \psi_1}{\partial x}, (\beta -\frac 12)\frac{\partial \psi_2}{\partial x} \right)^{T}.
\end{align}
System \eqref{abs_two_layer_m} can be written in an abstract form as
\begin{equation}
\frac{\partial q}{\partial t} =F(q)q,
\end{equation}
where $F(q)=-{\bf J}(Rq, .) -L_1-L_2 + \nu A^{m} R.$
The first variation equation about $q$, where $q$ is assumed to be the solution of system \eqref{two_layer_m}, is given by (here $Q=(Q_1,Q_2)$)
\begin{align}
\frac{\partial Q}{\partial t}=F'(q)Q,\qquad Q(0)=\xi.
\end{align}
It is easy to check that this equation can be written (formally) as
\begin{align}
\frac{\partial Q}{\partial t}+{\bf J}(Rq, Q)+ {\bf J}(RQ,q)+ {\bf J}(RQ,Q)+L_1Q+L_2Q-\nu A^m RQ=0, \label{linear1}\\
\int_\Omega Q\ dxdy=0,  \qquad Q(0)=\xi.\label{linear3}
\end{align}
Following similar arguments to those in the previous sections, and the Galerkin approximation method, one can show the following;\\
\noindent (i) if $q$ is a solution of \eqref{two_layer_m} with
$q \in L^{\infty}(0,T;\nH)\cap L^2(0,T;\nV^{m-1}), $ for all $T>0$, then for $\xi$ given in $\nH$, equation \eqref{linear1} has a unique solution
$Q$ satisfying
\begin{equation}
Q\in L^{\infty}(0,T;\nH)\cap L^2(0,T;\nV^{m-1});
\end{equation}
\noindent (ii) the semigroup $S(t): \nH \rightarrow \nH$ that satisfies $S(t)q^0 = q$ and $\frac{\partial S(t)q_0}{\partial q_0}\xi = Q(t)$ is Fr\'echet differentiable in $\nH,$
with respect to the initial values.

We now estimate the dimension of the global  attractor $\mathcal A$, following the work of \cite{Constantin_Foias_1985} (see also \cite{CF88} and \cite{T}). Let  $q=q(\tau)=S(\tau)q^0$ be a fixed orbit (i.e. solution of \eqref{abs_two_layer_m}, with time denoted by $\tau$). For $k\in \mathbb N,$ we consider $\xi_1,\dots,\xi_k, \ k$ elements of $\nH,$ and the corresponding solutions $Q_1, \dots, Q_k$ of \eqref{linear1})-\eqref{linear3}, with initial data $Q_j(0)= \xi_j, \, j=1, \dots, k.$
Let $P_k(\tau)=P_k(\tau, q_0;\xi_1,\dots ,\xi_k)$ be the orthogonal projector in $\nH$ onto the space spanned by $\{Q_1(\tau), \dots ,Q_k(\tau)\}.$ At a given time $\tau$, let $\theta_j(\tau),\ j\in \mathbb N,$ be an orthonormal basis, such that $\mbox{Span}\{\theta_1(\tau), \dots, \theta_k(\tau)\}$ = $P_k(\tau)\nH= \mbox{Span}\{Q_1(\tau), \dots , Q_k(\tau)\}.$ Since $Q_j(\tau)\in \nV^{m-1}={\dot H}^{m-1}_{per}\times{\dot H}^{m-1}_{per}$, for a.e. $\tau,$ the functions $\theta_1(\tau), \dots, \theta_k(\tau)$ also belong to $\nV^{m-1}$, for a.e. $\tau.$ We also have
\begin{align}
\mbox{Tr}\left(F'(q(\tau))\circ P_k(\tau)\right)&=\sum_{j=1}^{k}(F'(q(\tau))\circ P_k(\tau)\theta_j(\tau), \theta_j(\tau))\notag \\
&=\sum_{j=1}^{k}(F'(q(\tau))\theta_j(\tau), \theta_j(\tau)).
\end{align}
Writing $\theta_j=(\theta_{j,1},\theta_{j,2}),$ we also define $(\gamma_{j,1}, \gamma_{j,2})= R\theta_j,$ i.e.,
\begin{align*}
\theta_{j,1}=\Delta\gamma_{j,1}-\hat\gamma_j, \quad \theta_{j,2}=\Delta\gamma_{j,2}+\hat\gamma_j, \quad \hat\gamma_j= \frac 12(\gamma_{j,1}-\gamma_{j,2}), \\
\int_\Omega \gamma_{j,i}dxdy=0, \quad \gamma_{j,i} \text{ $\Omega$- periodic, }i=1,2, \ j=1, \dots , k,
\end{align*}
Omitting temporarily the variable $\tau,$ we see that
\begin{align}
(F'(q)\theta_j, \theta_j)=-&\big({\bf J}(Rq, \theta_j),\theta_j\big)-\big({\bf J}(R\theta_j, q),\theta_j\big)- ({\bf J}(R\theta_j,\theta_j),\theta_j)\notag \\
&-(L_1\theta_j, \theta_j)-(L_2\theta_j, \theta_j)+\nu(A^mR\theta_j, \theta_j).
\end{align}
Next, we note that $\big({\bf J}(Rq, \theta_j),\theta_j\big)=({\bf J}(R\theta_j,\theta_j), \theta_j)=0$ and that
\begin{align*}
(A^mR\theta_j, \theta_j)=&(A^m\gamma_{j,1},-A\gamma_{j,1}-\hat \gamma_{j}) +(A^m\gamma_{j,2},-A\gamma_{j,2}+\hat \gamma_{j})\\
=& -\|A^{(m+1)/2}\gamma_j\|_{L^2}^2 -2\|A^{m/2}\hat \gamma_j\|_{L^2}^2,
\end{align*}
where
$$
\|A^{(m+1)/2}\gamma_j\|_{L^2}^2=\|A^{(m+1)/2} \gamma_{j,1}\|_{L^2}^2+\|A^{(m+1)/2}\gamma_{j,2}\|_{L^2}^2.
$$
Also, we have
\begin{align*}
(L_1\theta_j, \theta_j)&= \left( \frac{\partial \theta_{j,1}}{\partial x} -\kappa_T\hat\gamma_j, \theta_{j,1}\right) + \left(- \kappa_MA\gamma_{j,2}+\kappa_T\hat\gamma_j, \theta_{j,2} \right )
\\
&= \left(-\kappa_T\hat\gamma_j, -A \gamma_{j,1}- \hat \gamma_j\right) + \left( -\kappa_MA\gamma_{j,2}+\kappa_T\hat\gamma_j, -A \gamma_{j,2}+\hat \gamma_j \right )\\
&= 2\kappa_T  \|\hat\gamma_j\|_{L^2}^2 + 2\kappa_T\|A^{1/2} \hat \gamma_j\|_{L^2}^2 + \kappa_M\|A\gamma_{j,2}\|_{L^2}^2 - \kappa_M(A \gamma_{j,2},\hat \gamma_j),
\end{align*}
so,
\begin{align}
-(L_1\theta_j,\theta_j) &= -2\kappa_T  \|\hat\gamma_j\|_{L^2}^2 - 2\kappa_T\|A^{1/2} \hat \gamma_j\|_{L^2}^2 - \kappa_M\|A\gamma_{j,2}\|_{L^2}^2 + \kappa_M(A^{1/2} \gamma_{j,2},A^{1/2}\hat \gamma_j) \notag \\
&\leq -\kappa_T \|\hat \gamma_j\|_{L^2}^2 -\kappa_T\|A^{1/2} \hat \gamma_j\|_{L^2}^2 + \left(\frac{\kappa_M^2}{4\kappa_T} - \kappa_M \right) \|A^{1/2} \gamma_{j,2}\|_{L^2}^2.
\end{align}
Furthermore,
\begin{align}
|( L_2\theta_j, \theta_j)|& = \left|\left((\beta+\frac{1}{2})\frac{\partial \gamma_{j,1}}{\partial x} , \theta_{j,1}\right)\right|+ \left|\left((\beta-\frac{1}{2})\frac{\partial \gamma_{j,2}}{\partial x}, \theta_{j,2}\right)\right| \notag\\
& \leq (|\beta|+1) \int_\Omega |A^{1/2}\gamma_j(\tau; x,y)||\theta_j(\tau;x,y)|\, dxdy,
\end{align}
and
\begin{align}
|\big({\bf J}(R\theta_j,q),\theta_j)| &=  \left| \int_\Omega A^{1/2}\gamma_j(\tau;x,y)\theta_j(\tau;x,y)A^{1/2} q(\tau;x,y)\,dxdy\right|\notag \\
&\leq \int_\Omega |A^{1/2}\gamma_j(\tau; x,y)||\theta_j(\tau;x,y)||A^{1/2}q(\tau; x,y)|\, dxdy.
\end{align}
Hence,
\begin{align*}
\sum_{j=1}^{k}(F'(q)\theta_j, \theta_j)&\leq -\nu \sum_{j=1}^{k}
\big[A^{(m+1)/2}\gamma_j|^2+2|A^{m/2}\hat\gamma_j|^2\big]\\
&-\sum_{j=1}^k
\big[\kappa_T \|\hat \gamma_j\|_{L^2}^2 +2\kappa_T\|A^{1/2} \hat \gamma_j\|_{L^2}^2 - \left(\frac{\kappa_M^2}{4\kappa_T} - \kappa_M \right) \|A^{1/2} \gamma_{2,j}\|_{L^2}^2\big]\\
&+\sum_{j=1}^k\int_\Omega |A^{1/2}\gamma_j(\tau; x,y)||\theta_j(\tau;x,y)|\left(|\beta|+1+|A^{1/2}q(\tau; x,y)|\right)\, dxdy.
\end{align*}
Notice that
\begin{align}
\sum_{j=1}^k\int_\Omega |A^{1/2}\gamma_j(\tau; x,y)||\theta_j(\tau;x,y)|\left(|\beta|+1+|A^{1/2}q(\tau; x,y)|\right)\, dxdy \leq \notag \\
\int_{\Omega} \rho^{1/2}(\tau;x,y)\sigma^{1/2}(\tau;x,y)\left(|\beta|+1+|A^{1/2}q(\tau; x,y)|\right)\, dxdy,
\end{align}
where
\begin{align}
\sigma(\tau;x,y) = \sum_{j=1}^k |A^{1/2}\gamma_j(\tau;x,y)|^2, \qquad \rho(\tau;x,y)= \sum_{j=1}^k |\theta_j(\tau;x,y)|^2.
\end{align}
By the H\"older inequality we have
\begin{align}
&\int_{\Omega} \rho^{1/2}\sigma^{1/2}\left(|\beta|+1+|A^{1/2}q(\tau; x,y)|\right)\, dxdy \leq \notag \\
&\qquad \qquad \|\rho\|_{L^2}^{1/2}\|\sigma\|_{L^\infty}^{1/2}\left(\|A^{1/2}q\|_{L^{4/3}} + L^{3/2}(|\beta|+1)\right).
\end{align}
Using a version of the Lieb-Thirring inequality introduced in, e.g., \cite{CF88}, \cite{T}, there is a constant $C$ such that
\begin{align}
\|\rho\|_{L^2}^{1/2}&\leq C \left(\sum_{j=1}^m\|A^{1/2}\theta_j\|_{L^2}^2\right)^{1/4}\leq C(1+L^4)^{1/4}\left(\sum_{j=1}^m\|A^{3/2}\gamma_j\|_{L^2}^2\right)^{1/4},
\end{align}
and by the version we prove in section 8,
\begin{align}
\|\sigma\|_{L^\infty}^{1/2}&\leq CL^{1/2}\left(\sum_{j=1}^m\|A^{3/2}\gamma_j\|_{L^2}^2\right)^{1/4}.
\end{align}
Since
\begin{align*}
\|A^{1/2} q\|_{L^{4/3}}\leq L^{1/2} \|A^{1/2}q\|_{L^2}.
\end{align*}
we have
\begin{align*}
&\int_{\Omega} \rho^{1/2}\sigma^{1/2}\left(|\beta|+1+|A^{1/2}q(\tau; x,y)|\right)\, dxdy \leq \notag \\
& C(1+L^4)^{1/4}L^{1/2}L^{m-2}\left(\sum_{j=1}^m\|A^{(m+1)/2}\gamma_j\|_{L^2}^2\right)^{1/2}\left(L^{1/2} \|A^{1/2}q\|_{L^2}+ L^{3/2}(|\beta|+1)\right).
\end{align*}
By Young's inequality we then have that
\begin{align}
&\int_{\Omega} \rho^{1/2}\sigma^{1/2}\left(|\beta|+1+|A^{1/2}q(\tau; x,y)|\right)\, dxdy \leq \notag \\
& \frac \nu4\sum_{j=1}^m\|A^{(m+1)/2}\gamma_j\|_{L^2}^2 + \frac{C(1+L^4)^{1/2}L^{2m-2}}{\nu} \left(\|A^{1/2}q\|_{L^2}^2 + L^2(|\beta|+1)^2\right).
\end{align}
Using the version of the Lieb-Thirring inequality we prove in section 8, we have
\begin{align}
\left|\frac{\kappa_M^2}{4\kappa_T}-\kappa_M\right| \sum_{j=1}^k\|A^{1/2}\gamma_{j,2}\|_{L^2}^2 & = \int_\Omega \sum_{j=1}^k|A^{1/2}\gamma_j(\tau;x,y)|^2\,dxdy \notag \\
& \leq C_7L\int_\Omega\left(\sum_{j=1}^k\|A^{3/2}\gamma_j\|_{L^2}^2\right)^{1/2}\,dxdy \notag \\
& \leq C_7L^{m+1}\left(\sum_{j=1}^k\|A^{(m+1)/2}\gamma_j\|_{L^2}^2\right)^{1/2} \notag \\
& \leq \frac \nu 4\sum_{j=1}^k\|A^{(m+1)/2}\gamma_j\|_{L^2}^2+ \frac{C_7^2L^{2m+2}}{\nu},
\end{align}
where $C_7 = C_7(\kappa_T,\kappa_M)$.
We then conclude that
\begin{align}
\sum_{j=1}^{k}(F'(q)\theta_j, \theta_j)\leq & -\frac{\nu}{2} \sum_{j=1}^k \|A^{(m+1)/2}\gamma_j\|_{L^2}^2 +  \frac{C_7^2L^{2m+2}}{\nu} \notag \\
& + \frac{C(1+L^4)^{1/2}L^{2m-2}}{\nu} \left(\|A^{1/2}q\|_{L^2}^2 + L^2(|\beta|+1)^2\right).
\end{align}
Now we note that, since $\theta_j, \ j=1, \dots ,k$ are orthonormal in $\nH$, the
eigenvalues $(\lambda_j)_{j\in \mathbb N}$ of $A^{m-1}$ satisfy
\begin{equation}
 \lambda _j \sim C L^{-2(m-1)}j^{m-1}, \quad\quad \text{for all }j \in \mathbb N,
\end{equation}
we have (see, for example, Lemma 2.1, Chapter VI in \cite{T})
\begin{equation}
\sum_{j=1}^{k}
|A^{(m-1)/2} \theta_j|^2\geq CL^{-2(m-1)}k^m,
\end{equation}
for some positive absolute constant $C$, and therefore from the elliptic regularity estimate \eqref{regularity} we conclude that
\begin{align}
\sum_{j=1}^{k}(F'(q)\theta_j, \theta_j)\leq &\frac{-\nu C}{(1+L^4)L^{2(m-1)}}k^m + \frac{C_7^2L^{2m+2}}{\nu} \notag \\
& + \frac{C(1+L^4)^{1/2}L^{2m-2}}{\nu} \left(\|A^{1/2}q\|_{L^2}^2 + L^2(|\beta|+1)^2\right).
\end{align}
From this we obtain
\begin{align}\label{two_layer_m_trace_limsup}
&\limsup_{t\to \infty}\frac 1t\int_0^t\sum_{j=1}^{k}(F'(q(\tau)\theta_j(\tau), \theta_j(\tau))\,d\tau \leq \frac{-\nu C}{(1+L^4)L^{2(m-1)}}k^m + \frac{C_7^2L^{2m+2}}{\nu} \notag \\
& \qquad \qquad \qquad \qquad \qquad + \frac{C(1+L^4)^{1/2}L^{2m-2}}{\nu} \left(\zeta + L^2(|\beta|+1)^2\right),
\end{align}
where from the previous section, \eqref{two_layer_m_zeta} implies that
\begin{align}\label{zeta_two_layer_m}
\zeta&:= \limsup_{t\to\infty} \frac 1t\int_0^t \|A^{1/2} q(\tau)\|_{L^2}^2\, d\tau \notag \\
& \qquad \qquad \qquad \qquad \leq \frac{CC_5(1+L^4)L^{2(m-2)}}{\nu} (\left(\|A^{1/2}\bar\psi\|_{L^2}^2 + \frac{\bar\gamma}{C_6}\right).
\end{align}
where ${\bar\gamma}$ is given in \eqref{bar_gamma_two_layer_m}, $C_6=\min\{C_3,C_4\}$, where $C_3$ and $C_4$ are given in \eqref{C1_two_layer_m} and {\eqref{C2_two_layer_m}, respectively, and $C_5$ is given in \eqref{C4_two_layer_m}.

Now we assume that $q_0$ belongs to the global attractor $\mathcal A$
and introduce the quantities $r_k(t)$ and $R_k,$
\begin{equation}
r_k(t)=\sup_{q_0\in \mathcal A} \sup_
{|\xi_i|\leq 1,i = 1,\dots k}
\big(\frac 1t\int_0^t\mbox{Tr }F'(S(\tau)q_0)\circ P_k(\tau)\,d\tau\big),
\end{equation}
\begin{equation}
R_k=\limsup_{t\to \infty}r_k(t).
\end{equation}
From \eqref{two_layer_m_trace_limsup}, we have that
\begin{align*}
R_k \leq  \frac{-\nu C}{(1+L^4)L^{2(m-1)}}k^m + \frac{C_7^2L^{2m+2}}{\nu} + \frac{C(1+L^4)^{1/2}L^{2m-2}}{\nu} \left(\zeta+ L^2(|\beta|+1)^2\right).
\end{align*}
With the definition
\begin{align}
d := min \left\{k \in \nN: R_k \leq 0 \right\},
\end{align}
one can easily prove that
\begin{align}
d-1 < \left(\frac{(1+L^4)C_7^2L^{4m}}{C\nu^2} + \frac{C(1+L^4)^{3/2}L^{4m-4}}{\nu^2} \left(\zeta+ L^2(|\beta|+1)^2\right)\right)^{1/m} \leq d.
\label{dimension_two_layer_m}
\end{align}
Then, from the general theory for the dimension of the attractors
(see for instance \cite{CI}, \cite{CF88} and \cite{T}),
we obtain that the Hausdorff dimension of the attractor $\mathcal A$ is less
than or equal to $d.$ We have just proved the following:
\begin{theorem}  Consider the dynamical system associated with the two-layer model \eqref{two_layer_m} restricted to periodic functions which are odd in the variable $y$ with periodic boundary conditions. We denote by $\mathcal A$ its global attractor, and we let $d$ be defined to satisfy \eqref{dimension_two_layer_m} where $\zeta$, $\bar \gamma$, $C_3$, $C_4$ and $C_5$ are given in \eqref{zeta_two_layer_m}, \eqref{bar_gamma_two_layer_m}, \eqref{C1_two_layer_m}, \eqref{C2_two_layer_m} and \eqref{C4_two_layer_m}, respectively. $C_6 = \min\{C_3, C_4\}$ and $C_7 = C_7(\kappa_T,\kappa_M)$.
\begin{align*}
\bar\psi ^{'} (y)&= -2 \sum _{k=1}^{M} \cos \left( \frac{ 2\pi k}{L} y\right) ,
\end{align*}
with $M$ is chosen to be large enough to satisfy \eqref{M_two_layer_m}. Then the Hausdorff dimension of $\mathcal A$ is less than or equal to $d$ and its fractal dimension is less than or equal to $2d$.
\end{theorem}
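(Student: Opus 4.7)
The plan is to apply the Constantin--Foias--Temam framework for bounding the Hausdorff and fractal dimensions of the global attractor via estimates on the trace of the linearized evolution along orbits in $\mathcal{A}$. I would first recast \eqref{two_layer_m} as $\partial_t q = F(q)$ with $F(q) = -\mathbf{J}(Rq,\cdot) - L_1 - L_2 + \nu A^m R$, and check that the first variation equation \eqref{linear1}--\eqref{linear3} is well-posed in $\nH$ with $Q \in L^\infty(0,T;\nH) \cap L^2(0,T;\nV^{m-1})$, so that the semigroup $S(t)$ is Fr\'echet differentiable in $\nH$. These facts can be proven by standard Galerkin arguments analogous to those used for the nonlinear system in section 4.

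The core of the proof is the estimate of $\sum_{j=1}^k (F'(q(\tau))\theta_j, \theta_j)$ for an orthonormal basis $\{\theta_j\}$ of $P_k(\tau)\nH$, where $q(\tau) = S(\tau)q_0$ with $q_0 \in \mathcal{A}$. Using the cancellations $(\mathbf{J}(Rq,\theta_j),\theta_j) = 0$ and $(\mathbf{J}(R\theta_j,\theta_j),\theta_j) = 0$, the only contributions that must be estimated carefully are $(\mathbf{J}(R\theta_j,q),\theta_j)$ and $(L_1\theta_j,\theta_j)$, $(L_2\theta_j,\theta_j)$, while the hyperviscous term produces the negative contribution $-\nu\|A^{(m+1)/2}\gamma_j\|_{L^2}^2 - 2\nu\|A^{m/2}\hat\gamma_j\|_{L^2}^2$. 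For the dangerous nonlinear and forcing terms, I would pass to the pointwise densities $\rho(\tau;x,y) = \sum_j |\theta_j|^2$ and $\sigma(\tau;x,y) = \sum_j |A^{1/2}\gamma_j|^2$, apply H\"older to control $\int \rho^{1/2}\sigma^{1/2}(|\beta|+1+|A^{1/2}q|)\, dxdy$, and then invoke the standard Lieb--Thirring bound for $\|\rho\|_{L^2}$ together with the Lieb--Thirring type inequality proved in section~8 to bound $\|\sigma\|_{L^\infty}$, both in terms of $\sum_j \|A^{3/2}\gamma_j\|_{L^2}^2$. Young's inequality absorbs a fraction of $\sum_j \|A^{(m+1)/2}\gamma_j\|_{L^2}^2$ into the dissipation, leaving a remainder involving $\|A^{1/2}q\|_{L^2}^2$ plus constants depending on $L$, $\beta$, $\kappa_T$, $\kappa_M$, and $\nu$.

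Next I would use the Weyl asymptotics $\lambda_j(A^{m-1}) \sim CL^{-2(m-1)}j^{m-1}$ together with the elliptic regularity \eqref{regularity} to bound the remaining dissipative sum from below by $C\nu(1+L^4)^{-1}L^{-2(m-1)}k^m$. Taking the $\limsup$ in $t$ of the Cesaro mean $\frac{1}{t}\int_0^t \sum_j (F'(q(\tau))\theta_j,\theta_j)\,d\tau$ and replacing $\frac{1}{t}\int_0^t \|A^{1/2}q\|_{L^2}^2\, d\tau$ by its upper bound $\zeta$ from \eqref{two_layer_m_zeta} yields the inequality satisfied by $R_k$. Setting $d = \min\{k\in\nN: R_k \leq 0\}$ and solving the polynomial inequality $\frac{\nu C}{(1+L^4)L^{2(m-1)}}k^m \geq \frac{C_7^2 L^{2m+2}}{\nu} + \frac{C(1+L^4)^{1/2}L^{2m-2}}{\nu}(\zeta + L^2(|\beta|+1)^2)$ produces the explicit bound \eqref{dimension_two_layer_m}. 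The classical Constantin--Foias--Temam theorem then gives $\dim_H(\mathcal{A}) \leq d$ and $\dim_F(\mathcal{A}) \leq 2d$.

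The main obstacle I anticipate is the treatment of the nonlinear coupling $(\mathbf{J}(R\theta_j,q),\theta_j)$, because the pointwise factor $|A^{1/2}q(\tau;x,y)|$ is controlled only in an averaged sense along the orbit. The trick is that the Constantin--Foias--Temam criterion only requires control of the $\limsup$ of the time average, which lets one substitute the a priori bound $\zeta$ from section~5. Everything else is a careful bookkeeping of $L$-dependent prefactors, which is routine but must be done consistently so that the final dimension estimate \eqref{dimension_two_layer_m} matches the quantities $\bar\gamma$, $C_3$, $C_4$, $C_5$, $C_6$, $C_7$ already introduced.
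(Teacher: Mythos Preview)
Your proposal is correct and follows essentially the same route as the paper's proof in Section~6: the Constantin--Foias--Temam trace criterion, the same cancellations in the bilinear terms, the pointwise densities $\rho$ and $\sigma$ controlled via the standard Lieb--Thirring inequality and the custom one from Section~8, absorption by Young's inequality into the hyperviscous dissipation, the Weyl lower bound $\sum_j\|A^{(m-1)/2}\theta_j\|^2\geq CL^{-2(m-1)}k^m$, and finally the substitution of the time-averaged bound $\zeta$ from \eqref{two_layer_m_zeta}. There is no substantive difference in strategy or in the key estimates.
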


\bigskip
\section{Existence of an inertial manifold}
In this section we sketch the proof of existence of an inertial manifold for the two-layer quasi-geostrophic model \eqref{two_layer_m} when $m>5/2$.
We start by recalling the definition of an inertial manifold. Let $H$ be a real Hilbert space and consider the differential equation
\begin{equation}
\frac{\partial q}{\partial t}+ Mq +G(q)=0, \quad q(0)=q_0\in H. \label{I.1}
\end{equation}
Here $M$ is a linear self-adjoint, positive definite, densely defined operator with compact resolvant, while $G$ is a nonlinear operator defined on $D(M),$
the domain of the operator $M$.
An inertial manifold for \eqref{I.1} is a Lipschitz manifold $\mathcal M$ that enjoys the following properties:(i) $\mathcal M$ is finite dimensional,
(ii) $\mathcal M$ is positively invariant under the semi-flow generated by \eqref{I.1},
(iii) $\mathcal M$ is a globally attracting set for the solutions of \eqref{I.1} at an exponential rate. For more details we refer the reader to the general references to the theory, and different ways of establishing inertial manifolds, \cite{Constantin_Foias_Nicoleanko_Temam_1989} and \cite{Constantin_Foias_Nicoleanko_Temam_1989_book}. (see also \cite{FSTi} and \cite{T}).

For the two-layer quasi-geostrophic model \eqref{two_layer_m}, we have, $H = \nH = \dot{L}^2_{per} \times  \dot{L}^2_{per}$, $q=(q_1,q_2)^T \in \nH$, and
\begin{align*}
M := A^{m}R, \qquad G(q): =L_1q+L_2q +{\bf J}(Rq,q),
\end{align*}
where $L_1$ and $L_2$ are linear operators and are defined by
\begin{align*}
L_1q= \left(\frac{\partial q_1}{\partial x} -\kappa_T\hat\psi, -\kappa_MA \psi_2+\kappa_T\hat\psi \right ) ^{T},\quad
L_2q= \left((\beta +\frac 12)\frac{\partial \psi_1}{\partial x}, (\beta -\frac 12)\frac{\partial \psi_2}{\partial x} \right)^{T}.
\end{align*}
Recall that, due to the elliptic regularity estimate,
\begin{align}
|Mq| \sim | A^{m-1}q|
\end{align}
and thus, it is fairly easy to check that for any $m>5/2$ we have
\begin{align}
&|M^{-1/2}G(q)-M^{-1/2}G(\bar q)|\leq C(K, L,m) |Mq-M\bar q|,
\end{align}
for all $q$, ${\bar q}$ satisfying $|Mq|\leq K, |M\bar q|\leq K$,and that $G$ is a bounded mapping from $D(M)$ into $D(M^{-1/2}).$
Furthermore, if $\lambda= \lambda_N$ and $\Lambda=\lambda_{N+1},$ we have $\lambda_N \sim c_0\lambda_1N^{m-1}$ as $N\to \infty.$ The spectral gap condition (see for instance \cite{FSTi} or \cite{T}) is satisfied and the existence of an inertial manifold follows from the general theory (for example, see \cite{FSTi} and Theorem 3.1 in \cite{T}).
\begin{theorem} Assume that $m>5/2$, then the dynamical system associated with the two-layer quasi-geostrophic model \eqref{two_layer_m} restricted to periodic functions which are odd in the variable $y$ with periodic boundary conditions posses an inertial manifold.
\end{theorem}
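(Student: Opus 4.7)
The plan is to cast system \eqref{two_layer_m} in the abstract form \eqref{I.1} and apply the general existence theorem for inertial manifolds (Theorem 3.1 of \cite{T}; see also \cite{FSTi}), by checking its three hypotheses: (a) $M=A^{m}R$ is self-adjoint, positive definite, densely defined, with compact resolvent on $\nH$; (b) the nonlinearity $G$ is bounded from $D(M)$ into $D(M^{-1/2})$ and locally Lipschitz in the corresponding norms; (c) the eigenvalues of $M$ satisfy the spectral gap condition for infinitely many indices.

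For (a), I would use the elliptic estimate \eqref{regularity} to obtain $\|Mq\|_{\nH} \simeq \|A^{m-1}q\|_{\nH}$, so that $D(M)=\nV^{m-1}$; self-adjointness and positivity of $M$ follow from the symmetry of the elliptic system \eqref{elliptic_two_layer} defining $R$, and compactness of $M^{-1}$ from Rellich's lemma applied to the compact embedding $\nV^{m-1}\hookrightarrow\nH$. The Weyl asymptotic formula in two dimensions then yields the eigenvalue count $\lambda_N\sim c_0 L^{-2(m-1)}N^{m-1}$ stated in the excerpt.

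For (b), the linear operators $L_1q$ and $L_2q$ involve at most second derivatives of $\psi=Rq$ and are continuous from $D(M)$ into $\nH\hookrightarrow D(M^{-1/2})$ by \eqref{Poincare} and \eqref{regularity}. For the bilinear Jacobian term, combining Agmon's inequality \eqref{Agmon} to control $\|\psi\|_{L^{\infty}}$ with the interpolation \eqref{Sobolev_interpolation} and \eqref{regularity} to control $q$ in $\dot H^{m-2}$ yields, on any ball $\{\|Mq\|_{\nH}\le K\}$,
\begin{equation*}
\|M^{-1/2}\mathbf{J}(Rq,q)\|_{\nH}\le C(K,L,m)\|Mq\|_{\nH},
\end{equation*}
together with a parallel difference estimate obtained from the bilinear decomposition $\mathbf{J}(Rq,q)-\mathbf{J}(R\bar q,\bar q)=\mathbf{J}(R(q-\bar q),q)+\mathbf{J}(R\bar q,q-\bar q)$; the lower bound $m>5/2$ is exactly what is needed to make these two-dimensional embedding inequalities work at the required loss of derivative.

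The main obstacle is (c). Given the $D(M)\to D(M^{-1/2})$ Lipschitz bound, the required spectral gap condition has the form $\lambda_{N+1}-\lambda_N\ge K(\lambda_{N+1}^{1/2}+\lambda_N^{1/2})$ for infinitely many $N$. Since the eigenvalues of $M$ are $\sim(2\pi|k|/L)^{2(m-1)}$ with $k\in\mathbb{Z}^2\setminus\{0\}$ and two-dimensional lattice points cluster badly, one cannot simply use the average gap; instead, one selects a subsequence $\{N_j\}\to\infty$ via a number-theoretic lattice-point argument of the type used in \cite{FSTi} (see also the detailed treatment in \cite{T}), exploiting the fact that $\rho\mapsto\rho^{m-1}$ spreads radial clusters by a factor of order $\rho^{m-2}$, which dominates $\rho^{(m-1)/2}$ precisely when $m-1>3/2$. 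Once (a)--(c) are verified, the abstract theorem produces the desired finite-dimensional, positively invariant, exponentially attracting Lipschitz inertial manifold for \eqref{two_layer_m}.
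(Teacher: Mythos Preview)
Your overall strategy---casting the system in the abstract form and invoking the general existence theorem from \cite{T} and \cite{FSTi} after checking self-adjointness of $M=A^{m}R$, a Lipschitz bound on $G$, and a spectral gap---is exactly the paper's approach, and your treatment of (a) and (b) is in fact more detailed than the paper's sketch.

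There is, however, a genuine mismatch in your step (c). You assert the gap condition in the form $\lambda_{N+1}-\lambda_N\ge K(\lambda_{N+1}^{1/2}+\lambda_N^{1/2})$ and then claim that the spreading $\rho\mapsto\rho^{m-1}$ makes $\rho^{m-2}$ dominate $\rho^{(m-1)/2}$ ``precisely when $m-1>3/2$.'' But $\rho^{m-2}\gg\rho^{(m-1)/2}$ is equivalent to $m>3$, not $m>5/2$; and the known number-theoretic gaps in sums of two squares (at most of order $(\log\rho)^{1/2}$) are far too small to close the deficit $\rho^{(3-m)/2}$ when $5/2<m<3$. So as written your argument does not cover the full range claimed.

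The repair is in (b), not (c): the Lipschitz estimate $G:D(M)\to D(M^{-1/2})$ that you (following the paper's statement) record is much cruder than what is actually available. In two dimensions the Jacobian $\mathbf J(Rq,q)$ loses only one derivative, i.e.\ $G:\nV^{s}\to\nV^{s-1}$ for $s>0$; since $D(M^{\alpha})=\nV^{2(m-1)\alpha}$ by the elliptic estimate $|Mq|\sim|A^{m-1}q|$, after the standard cutoff outside the absorbing ball one gets $G:D(M^{\alpha})\to\nH$ Lipschitz with $\alpha=\tfrac{1}{2(m-1)}$. The corresponding gap condition is $\lambda_{N+1}-\lambda_N\ge K\lambda_N^{1/(2(m-1))}$. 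Writing $\lambda\sim\rho^{m-1}$ with $\rho=(2\pi|\bk|/L)^2$, even the unit gaps between consecutive admissible values of $\rho$ give $\lambda_{N+1}-\lambda_N\gtrsim\rho^{m-2}$, while $\lambda^{1/(2(m-1))}\sim\rho^{1/2}$; the condition thus reduces to $m-2>1/2$, i.e.\ exactly $m>5/2$, and no lattice-point subsequence argument is required. This is presumably what the paper's terse ``the spectral gap condition is satisfied'' has in mind.
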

\bigskip
\section{Appendix: A Lieb-Thirring type inequality}
Using a technique due to C. Foias, we will prove the following inequality (see also \cite{Foias_Holm_Titi_2002} and \cite{ITi}).
\begin{lemma} There exist an absolute positive constant $C$, such that for every family of functions $\theta_1, \dots, \theta_k$ in  $\nV^1$, which is orthonormal in $\nH$ we have
\begin{align}
\left\|\sum_{j=1}^k|A^{1/2}\gamma_j(x,y)|^2\right\|_{L^{\infty}} \leq CL \left(\sum_{j=1}^k \|A^{3/2}\gamma_j\|_{L^2}^2\right)^{1/2}
\end{align}
where $\gamma_j= (\gamma_{j,1},\gamma_{j,2}), \ j=1,\dots ,k$, is the unique solution of
\begin{align}
& \theta_{j,i}=\Delta \gamma_{j,i}+ \frac{(-1)^i}{2}(\gamma_{j,1}-\gamma_{j,2}),\quad \int_\Omega \gamma_{j,i}\,dxdy=0, \quad i=1,2.
\end{align}
\end{lemma}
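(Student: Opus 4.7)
The plan is to reduce the pointwise estimate of $\rho(x_0,y_0) := \sum_{j=1}^k|A^{1/2}\gamma_j(x_0,y_0)|^2$ to a scalar estimate on a single ``admissible'' function via duality, and then apply Agmon's inequality together with the elliptic regularity bound \eqref{regularity}. For any fixed $(x_0,y_0)\in\Omega$, I would observe that
\begin{equation*}
\rho(x_0,y_0) \;=\; \sup_{\substack{a\in\nR^k\\|a|_2=1}}\Bigl|\sum_{j=1}^k a_j\,A^{1/2}\gamma_j(x_0,y_0)\Bigr|^2,
\end{equation*}
which is the standard equality case of Cauchy--Schwarz for the vector $(A^{1/2}\gamma_j(x_0,y_0))_j\in\nR^k$. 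For each admissible $a$, set $\theta:=\sum_j a_j\theta_j$ and $\gamma:=\sum_j a_j\gamma_j=R\theta$, where $R$ denotes the solution operator of the elliptic system defining $\gamma$ from $\theta$. The key structural input is that, because $\{\theta_j\}$ is orthonormal in $\nH$, I get $\|\theta\|_{\nH}=|a|_2=1$ for free.

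Next, I would bound $|\Phi(x_0,y_0)|^2$ with $\Phi:=A^{1/2}\gamma$. By Agmon's inequality \eqref{Agmon} applied componentwise in 2D,
\begin{equation*}
|\Phi(x_0,y_0)|^2 \;\le\; \|\Phi\|_{L^\infty}^2 \;\le\; C\,\|\Phi\|_{L^2}\,\|A\Phi\|_{L^2} \;=\; C\,\|A^{1/2}\gamma\|_{L^2}\,\|A^{3/2}\gamma\|_{L^2}.
\end{equation*}
The first factor I control using the Poincar\'e-type estimate \eqref{Poincare} and the elliptic regularity \eqref{regularity}: since $\gamma$ has zero mean in both components,
\begin{equation*}
\|A^{1/2}\gamma\|_{L^2} \;\le\; CL\,\|A\gamma\|_{L^2} \;\le\; CL\,\|\theta\|_{\nH} \;=\; CL.
\end{equation*}
This produces the announced factor of $L$.

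Finally, to bring the second factor into the form $\bigl(\sum_j\|A^{3/2}\gamma_j\|_{L^2}^2\bigr)^{1/2}$, I would apply Cauchy--Schwarz pointwise and integrate:
\begin{equation*}
\|A^{3/2}\gamma\|_{L^2}^2 \;=\; \int_\Omega\Bigl|\sum_j a_j A^{3/2}\gamma_j\Bigr|^2 \;\le\; \int_\Omega\Bigl(\sum_j|A^{3/2}\gamma_j|^2\Bigr) \;=\; \sum_{j=1}^k\|A^{3/2}\gamma_j\|_{L^2}^2,
\end{equation*}
using $|a|_2=1$. Combining these estimates gives $|\Phi(x_0,y_0)|^2\le CL\bigl(\sum_j\|A^{3/2}\gamma_j\|_{L^2}^2\bigr)^{1/2}$, and since the right-hand side is independent of $a$ and of $(x_0,y_0)$, taking suprema over both yields the claim. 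There is no serious obstacle here; the only subtle point is the careful bookkeeping of the $L$-dependence, which I would track by consistently using Poincar\'e in the form \eqref{Poincare} to trade each half-derivative for an $L$, and the elliptic regularity \eqref{regularity} is essential because the orthonormality assumption on $\{\theta_j\}$ must be converted into a uniform bound on $\|A\gamma\|_{L^2}$ in order for the duality to close.
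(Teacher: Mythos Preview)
Your approach is essentially identical to the paper's: form the linear combination $\gamma=\sum_j a_j\gamma_j$ with $|a|_2=1$, apply Agmon to $A^{1/2}\gamma$, use Poincar\'e plus elliptic regularity together with $\|\theta\|_{\nH}=1$ to absorb the low-order factor (producing the $L$), and finish with Cauchy--Schwarz on the $A^{3/2}$ term. The paper carries out exactly these steps, only phrasing the final optimization by explicitly choosing $a_j$ proportional to $A^{1/2}\gamma_{j,i}(x_0,y_0)$ for each component $i=1,2$ separately.

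One small correction: your duality identity $\rho(x_0,y_0)=\sup_{|a|_2=1}\bigl|\sum_j a_j A^{1/2}\gamma_j(x_0,y_0)\bigr|^2$ is not literally true, because $A^{1/2}\gamma_j(x_0,y_0)\in\nR^2$ is vector-valued; the left side is the Frobenius norm of the $2\times k$ matrix with those columns, while the right side is its operator norm. They can differ by a factor of $2$ (take $k=2$ with orthogonal columns). This is harmless for the lemma since it only affects the absolute constant, and the paper sidesteps it by running the optimization componentwise; you should do the same, or simply note that $\|\Gamma\|_F^2\le 2\,\|\Gamma\|_{\mathrm{op}}^2$ for a two-row matrix.
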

\begin{proof}
Applying the Agmon's inequality in space dimension two \eqref{Agmon} we obtain
\begin{align}
\|A^{1/2} \gamma_j\|_{L^\infty}&\leq C\|A^{1/2} \gamma_j\|_{L^2}^{1/2}\|A^{3/2}\gamma_j\|_{L^2}^{1/2},
\end{align}
which implies by the elliptic regularity estimate \eqref{regularity} and by the orthonormality of $\theta_1, \dots, \theta_k$ that:
\begin{align}\label{A_half_Lieb_Thiring_two_layer}
\|A^{1/2} \gamma_j\|_{L^\infty}&\leq CL^{1/2}\|\theta_j\|_{L^2}^{1/2}\|A^{3/2}\gamma_j\|_{L^2}^{1/2}= CL^{1/2}\|A^{3/2}\gamma_j\|_{L^2}^{1/2}.
\end{align}
Suppose that $\alpha_1, \dots, \alpha_k \in \nR$ such that $\sum_{j=1}^k \alpha_j^2 = 1$. Using inequality \eqref{A_half_Lieb_Thiring_two_layer} we show that

\begin{align}
\left|\sum_{j=1}^{k} \alpha_jA^{1/2}\gamma_j(x,y)\right| &\leq CL^{1/2}\left\|\sum_{j=1}^{k} \alpha_j A^{3/2}\gamma_j\right\|_{L^2}^{1/2} \notag \\
& \leq C L^{1/2} \left( \int_\Omega \left(\sum_{j=1}^k |A^{3/2}\gamma_j(x,y)|^2\right)\,dxdy\right)^{1/4} \notag \\
& = C L^{1/2} \left(\sum_{j=1}^k \left(\int_\Omega |A^{3/2}\gamma_j(x,y)|^2\,dxdy\right)\right)^{1/4},
\end{align}
thus
\begin{align}
\left|\sum_{j=1}^k \alpha_j A^{1/2}\gamma_j (x,y)\right| \leq CL^{1/2} \left(\sum_{j=1}^k \|A^{3/2}\gamma_j\|_{L^2}^2\right)^{1/4}.
\end{align}
Recall that $A^{1/2}\gamma_j = A^{1/2}\gamma_{j,1} {\bf e}_1 + A^{1/2}\gamma_{j,2} {\bf e}_2$, then the above inequality implies
\begin{align*}
\left( \sum_{j=1}^k \alpha_j A^{1/2}\gamma_{j,1} (x,y)\right)^2 + \left( \sum_{j=1}^k \alpha_j A^{1/2}\gamma_{j,2} (x,y)\right)^2 \leq CL  \left(\sum_{j=1}^k \|A^{3/2}\gamma_j\|_{L^2}^2\right)^{1/2},
\end{align*}
for every $\alpha_1, \dots, \alpha_k \in \nR$ that satisfy $\sum_{j=1}^k \alpha_j^2 = 1$. \\
First, we choose $\alpha_j = A^{1/2}\gamma_{j,1}(x,y) / \left(\sum_{j=1}^k \left(A^{1/2}\gamma_{j,1}(x,y)\right)^2\right)^{1/2}$, and later we choose $\alpha_j = A^{1/2}\gamma_{j,2}(x,y) / \left(\sum_{j=1}^k \left(A^{1/2}\gamma_{j,2}(x,y)\right)^2\right)^{1/2}$ to get that
\begin{align*}
\sum_{j=1}^k|A^{1/2}\gamma_j(x,y)|^2 \leq CL \left(\sum_{j=1}^k \|A^{3/2}\gamma_j\|_{L^2}^2\right)^{1/2},
\end{align*}
and thus
\begin{align}
\left\|\sum_{j=1}^k|A^{1/2}\gamma_j(x,y)|^2\right\|_{L^{\infty}} \leq CL \left(\sum_{j=1}^k \|A^{3/2}\gamma_j\|_{L^2}^2\right)^{1/2}.
\end{align}
\end{proof}
\section*{Acknowledgements}
This paper is dedicated to Professor Peter Constantin,
on the occasion of his  60th birthday, as token of friendship
and admiration for his contributions to research in partial
differential equations and fluid mechanics. We would also like to thank Professor Ciprian Foias for the stimulating and inspiring discussions regarding this work.
The work of A.F. and E.S.T. was supported in part by the NSF grants DMS-1009950,
DMS-1109640 and DMS-1109645. E.S.T.  also acknowledges the support of the Alexander von Humboldt
Stiftung/Foundation and the Minerva Stiftung/Foundation. {The work of  M.Z. was partially supported by the NSF grant DMS-1109562.}



\end{document}